\numberwithin{equation}{section}
\theoremstyle{theorem}
\newtheorem{thm}{Theorem}
\newtheorem{lem}[thm]{Lemma}
\newtheorem{cor}[thm]{Corollary}
\theoremstyle{remark}
\newtheorem{rem}{Remark}
\newcommand{\Z}{\mathbb{Z}}
\newcommand{\R}{\mathbb{R}}
\newcommand{\C}{\mathbb{C}}
\newcommand{\eq}[2]{ \begin{equation} \label{#1}\begin{split} #2 \end{split} \end{equation} }
\newcommand{\al}[1]{\begin{align} #1 \end{align} }
\newcommand{\als}[1]{\begin{align*} #1 \end{align*} }
\newcommand{\nn}{\nonumber \\}
\newcommand{\dee}{\mathrm{d}}
\newcommand{\abs}[1]{\left|#1\right|}
\newcommand{\fl}[1]{\left\lfloor#1\right\rfloor}
\renewcommand{\hat}{\widehat}
\renewcommand{\tilde}{\widetilde}
\newcommand{\eps}{\varepsilon}
\renewcommand{\phi}{\varphi}
\renewcommand{\Re}{{\rm Re}}
\definecolor{red}{rgb}{1,0,0}
\definecolor{blue}{rgb}{.2,.6,.75}
\begin{document}

\title{Beyond the LSD method for the partial sums of multiplicative functions}

\author[A. Granville]{Andrew Granville}
\address{AG: D\'epartement de math\'ematiques et de statistique\\
Universit\'e de Montr\'eal\\
CP 6128 succ. Centre-Ville\\
Montr\'eal, QC H3C 3J7\\
Canada; 
and Department of Mathematics \\
University College London \\
Gower Street \\
London WC1E 6BT \\
England.
}
\thanks{A.G.~was funded by the European Research Council grant agreement n$^{\text{o}}$ 670239, and by the Natural Sciences and Engineering Research Council of Canada (NSERC) under the Canada Research Chairs program.}
\email{{\tt andrew@dms.umontreal.ca}}

\author[D. Koukoulopoulos]{Dimitris Koukoulopoulos}
\address{DK: D\'epartement de math\'ematiques et de statistique\\
Universit\'e de Montr\'eal\\
CP 6128 succ. Centre-Ville\\
Montr\'eal, QC H3C 3J7\\
Canada}
\thanks{D.K.~was funded by an NSERC Discovery grant, and by the Fonds de Recherche du Qu\'ebec, Nature et Technologies, as part of the program {\it \'Etablissement de nouveaux chercheurs universitaires}.}
\email{{\tt koukoulo@dms.umontreal.ca}}

\thanks{Both authors would like to thank Kevin Ford for helpful conversations, and Peter Humphries for his expert help on zero-free regions for exotic $L$-functions. They would also like to thank the anonymous referee for a careful reading of the paper and many helpful comments.}

\subjclass[2010]{11N37}
\keywords{Averages of multiplicative functions; Landau-Selberg-Delange method}

\date{\today}

\begin{abstract} 
The Landau-Selberg-Delange (LSD) method gives an asymptotic formula for the partial sums of a multiplicative function $f$ whose prime values are $\alpha$ on average. In the literature, the average is usually taken to be $\alpha$ with a very strong error term, leading to an asymptotic formula for the partial sums with a very strong error term. In practice, the average at the prime values may only be known with a fairly weak error term, and so we explore here how good an estimate this will imply for the partial sums of $f$, developing new techniques to do so.
\end{abstract}

\maketitle

%%%%%%%%%%%%%%%%%%%%%%%%%%%%%%%%%%%%%%%%%%%%%%%%%%%%%%%%%%%%%%%%%%%%%%%%%%%%%%%%%%%%%%%%%%%%%%%%%%%%%%%%%%%%%%%%%%%%%%%%%%%%%%%%%%%%%%%%%%%%%%%%%%%%%%%%%%%%%%%%%%%%%%%%%%%%%%%%%%%%%%%%%%%%%%%%%%%%%%%%%%%%%%%%%%%%%%%%%%%%%%%%%%%%%%%%%%%%%%%%%%%%%%%%%%%%%%%%%%%%%%%%%%%%%%%%%%%%%%%%%%%%%%%%%%%%%%%%%%%%%%%%%%%%%%%%%%%%%%%%%%%%%%%%%%%%%%%%%%%%%%%%%%%%%%%%%%%%%%%%

\section{Introduction}

Let $f$ be a multiplicative function whose prime values are $\alpha$ on average, where $\alpha$ denotes a fixed complex number. The prototypical such function is $\tau_\alpha$, defined to be the multiplicative function with Dirichlet series $\zeta(s)^\alpha$. We then easily check that $\tau_\alpha(p)=\alpha$ for all primes $p$ and, more generally, $\tau_\alpha(p^\nu)  =\binom{\alpha+\nu-1}{\nu}=\alpha(\alpha+1)\cdots (\alpha+\nu-1)/\nu!$. 

In order to estimate the partial sums of $\tau_\alpha$, we use Perron's formula: for $x\notin\Z$, we have
\[
\sum_{n\le x}\tau_\alpha(n)  =\frac{1}{2\pi i} \int_{\Re(s)=1+1/\log x} \zeta(s)^\alpha \frac{x^s}{s} \dee s .
\]
However, if $\alpha\notin\Z$, then the function $\zeta(s)^\alpha$ has an essential singularity at $s=1$, so the usual method of shifting the contour of integration to the left and using Cauchy's residue theorem is not applicable. 

A very similar integral in the special case when $\alpha=1/2$ was encountered by Landau in his work on integers that are representable as the sum of two squares \cite{landau}, as well as on his work counting the number of integers all of whose prime factors lie in a given residue class \cite{landau2}. Landau discovered a way to circumvent this problem by deforming the contour of integration around the singularity at $s=1$, and then evaluating the resulting integral using Hankel's formula for the Gamma function. His technique was further developed by Selberg \cite{selberg} and then by Delange \cite{del1,del2}. In its modern form, it permits us to establish a precise asymptotic expansion for the partial sums of $\tau_\alpha$ and for more general multiplicative functions. These ideas collectively form what we call the {\it Landau-Selberg-Delange method} or, more simply, the {\it LSD method}.\ \footnote{The method is often called the Selberg-Delange method, or even Selberg's method, but a key idea appears in Landau's work   long before Selberg's and Delange's papers. We would like to thank Steve Lester for bringing this to our attention. Moreover, we would like to thank Kevin Ford for pointing out paper \cite{landau2}.}

Tenenbaum's book \cite{Ten} contains a detailed description of the LSD method along with a general theorem that evaluates the partial sums of multiplicative functions $f$ satisfying a certain set of axioms. Loosely, if $F(s)$ is the Dirichlet series of $f$ with the usual notation $s=\sigma+it$, then the axioms can be rephrased as: (a) $|f|$ does not grow too fast; (b) there are constants $\alpha\in\C$ and $c>0$ such that $F(s)(s-1)^\alpha$ is analytic for $\sigma>1-c/\log(2+|t|)$. If $\tilde{c}_0,\tilde{c}_1,\dots$ are the Taylor coefficients of the function $F(s)(s-1)^{\alpha}/s$ about 1, then Theorem II.5.2 in \cite[pp. 281]{Ten} implies that
\eq{SD-classical}{
\sum_{n\le x} f(n)  = x \sum_{j=0}^{J-1} \tilde{c}_j\frac{ (\log x)^{\alpha-j-1} }{\Gamma(\alpha-j)}+ O_{J,f}\left(x(\log x)^{\Re(\alpha)-J-1}\right)
}
for each fixed $J$.

Our goal in this paper is to prove an appropriate version of the above asymptotic formula under the weaker condition 
\eq{f-alpha}{
\sum_{n\le x} f(p)\log p = \alpha x +  O\left( \frac{x}{(\log x)^A} \right)  
	\quad( x\ge 2 )  
}
for some $\alpha\in\C$ and some $A>0$. In particular, this assumption does not guarantee that $F(s)(s-1)^\alpha$ has an analytic continuation to the left of the line $\Re(s)=1$. It does guarantee however that $F(s)(s-1)^\alpha$ can be extended to a function that is $J$ times continuously differentiable in the half-plane $\Re(s)\ge1$, where $J$ is the largest integer $<A$. We then say that $F(s)(s-1)^\alpha$ has a {\it $C^J$-continuation} to the half-plane $\Re(s)\ge1$, and we set
\eq{taylor-coeffs}{
c_j = \frac{1}{j!}\cdot  \frac{\dee^j}{\dee s^j}\bigg|_{s=1} (s-1)^\alpha F(s) 
\quad\text{and}\quad
\tilde{c}_j = \frac{1}{j!}\cdot  \frac{\dee^j}{\dee s^j}\bigg|_{s=1} \frac{(s-1)^\alpha F(s)}{s} 
}
for $j\leq J$, the first $J+1$ Taylor coefficients about 1 of the functions $(s-1)^\alpha F(s)$ and $(s-1)^\alpha F(s)/s$, respectively. Since $s=1+(s-1)$ and, as a consequence, $1/s=1-(s-1)+(s-1)^2+\ldots $ for $|s-1|<1$, these coefficients are linked by the relations 
\[
\tilde{c}_j=\sum_{a=0}^j (-1)^a c_{j-a} 
\quad\text{and}\quad c_j = \tilde{c}_j+\tilde{c}_{j-1} 
\quad(0\le j\le J)
\]
with the convention that $\tilde{c}_{-1}=0$. Since $\zeta(s)\sim 1/(s-1)$ and $f$ is multiplicative, we also have that
\[
c_0=\tilde{c}_0 = \prod_p \left(1+\frac{f(p)}{p}+\frac{f(p^2)}{p^2}+\cdots\right)\left(1-\frac{1}{p}\right)^{\alpha} .
\]

%%%%%%%%%%%%%%%%%%%%%%%%%%%%%%%%%%%%%%%%%%%%%%%%%%%%%%%%%%%%%%%%%%%%%%%%%%%%%%%%%%%%%%%%%%%%%%%%%%%%%%%%%%%%%%%%%%%%%%%%

\begin{thm}\label{SD} Let $f$ be a multiplicative function satisfying \eqref{f-alpha} and such that $|f|\le\tau_k$ for some positive real number $k$. If $J$ is the largest integer $<A$, and the coefficients $c_j$ and $\tilde{c}_j$ are defined by \eqref{taylor-coeffs}, then
\al{
\sum_{n\le x} f(n) 
	&= \int_2^x  \sum_{j=0}^J c_j  \frac{ (\log y)^{\alpha-j-1} } {\Gamma(\alpha-j)}\dee y
		+  O(x (\log x)^{k-1-A}(\log\log x)^{{\bf1}_{A=J+1}}) \label{asymp} \\
	&= x \sum_{j=0}^J \tilde{c}_j  \frac{ (\log x)^{\alpha-j-1} } {\Gamma(\alpha-j)}
		+  O(x (\log x)^{k-1-A}(\log\log x)^{{\bf1}_{A=J+1}}) \label{asymp2}   \ .
}
The implied constants depend at most on   $k$, $A$, and the implicit constant in \eqref{f-alpha}. The dependence on $A$ comes from both its size, and its distance from the nearest integer.
\end{thm}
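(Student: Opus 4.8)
The plan is to convert the arithmetic hypothesis \eqref{f-alpha} into analytic information about $F$, to peel off the main term by a Hankel-type argument, and to control what remains by exploiting the oscillation of $x^{it}$.

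\textbf{Step 1: from the arithmetic hypothesis to analytic data.} Write $-F'/F=\sum_n\Lambda_f(n)n^{-s}$, so that $\Lambda_f(p)=f(p)\log p$ while $|\Lambda_f(p^\nu)|$ is controlled by $\tau_k(p^\nu)$. Combining \eqref{f-alpha} with the prime number theorem and the trivial estimate for the prime powers $p^\nu$, $\nu\ge2$, gives
\[
\Psi_f(x):=\sum_{n\le x}\Lambda_f(n)=\alpha x+O\!\big(x/(\log x)^A\big).
\]
Since $\Psi(x):=\sum_{n\le x}\Lambda(n)=x+O(x/(\log x)^A)$, partial summation yields $\big|\sum_{n\le u}(\Lambda_f(n)-\alpha\Lambda(n))(\log n)^{l}\big|\ll u(\log u)^{l-A}$ for $l\le J-1$, and hence, pairing against $n^{-1-it}-n^{-1}$, that $\CE(s):=-F'(s)/F(s)+\alpha\zeta'(s)/\zeta(s)$ has a $C^{J-1}$-continuation to $\{\Re(s)\ge1\}$ whose $(J-1)$-st derivative has modulus of continuity $\ll\delta^{A-J}$ along $\Re(s)=1$ near $s=1$ (and $\ll\delta\log(1/\delta)$ if $A=J+1$), with implied constants depending on $\mathrm{dist}(A,\Z)$. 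Integrating, $F(s)=c_0\,\zeta(s)^{\alpha}\exp\!\big(-\!\int_1^s\CE(w)\,dw\big)$ near $s=1$, so $(s-1)^\alpha F(s)$ has the advertised $C^J$-continuation; writing $(s-1)^\alpha F(s)/s=\sum_{j=0}^{J}\tilde c_j(s-1)^j+\CR(s)$, Taylor's theorem gives $\CR^{(l)}(1)=0$ for $l\le J$ and
\[
\big|\CR^{(l)}(1+it)\big|\ll |t|^{A-l}\qquad(0\le l\le J)
\]
for small $|t|$ (times $\log(1/|t|)$ if $A=J+1$). From $|f|\le\tau_k$, the hypothesis, and classical bounds for $\zeta$, I would also record growth estimates of the shape $|F(s)|\ll(\log(2+|t|))^{O(k)}$ and $|F'(s)/F(s)|\ll\log(2+|t|)$ on $\Re(s)=1+1/\log x$, $|t|\ge1$, which control all the $t$-derivatives of $F(s)/s$ met below.

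\textbf{Step 2: Perron and extraction of the main term.} For $x\notin\Z$, a truncated Perron formula with $c=1+1/\log x$ and $T$ a large power of $\log x$, together with $\sum_{n\le x}\tau_k(n)\ll x(\log x)^{k-1}$, gives
\[
\sum_{n\le x}f(n)=\frac{1}{2\pi i}\int_{c-iT}^{c+iT}F(s)\,\frac{x^s}{s}\,ds+O\!\big(x(\log x)^{k-1-A}\big).
\]
Now split $F(s)/s=\sum_{j=0}^{J}\tilde c_j(s-1)^{j-\alpha}+(s-1)^{-\alpha}\CR(s)$. The first sum, times $x^s$, is holomorphic off $(-\infty,1]$ and decays rapidly as $\Re(s)\to-\infty$, so its contour collapses onto a Hankel contour around the non-positive reals; Hankel's identity $\frac1{2\pi i}\int w^{-z}e^w\,dw=1/\Gamma(z)$ then evaluates it \emph{exactly} as $x\sum_{j=0}^{J}\tilde c_j(\log x)^{\alpha-1-j}/\Gamma(\alpha-j)$, the main term of \eqref{asymp2} (and \eqref{asymp} follows by expanding $\int_2^x$). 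It therefore remains to bound $\CI:=\frac1{2\pi i}\int_{c-iT}^{c+iT}(s-1)^{-\alpha}\CR(s)\,x^s\,ds$, the horizontal pieces at height $T$ being negligible by Step 1.

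\textbf{Step 3: the remainder, and the main difficulty.} Move the contour of $\CI$ onto $\Re(s)=1$, with a semicircular indentation of radius $r=1/\log x$ into $\Re(s)>1$ around $s=1$; this is legitimate since $(s-1)^{-\alpha}\CR(s)$ is holomorphic for $\Re(s)>1$ and continuous up to $\Re(s)=1\setminus\{1\}$ — but it is all one can do, for, \emph{unlike the classical LSD situation, there is no analytic continuation past $\Re(s)=1$ and the Hankel contour cannot be closed to the left}. On the indentation, $|\CR|\ll r^A$ yields a contribution $\ll x\,r^{A-\Re(\alpha)+1}=x(\log x)^{\Re(\alpha)-1-A}\ll x(\log x)^{k-1-A}$ (using $\Re(\alpha)\le k$). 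On the two vertical segments $r\le|t|\le T$ of $\Re(s)=1$ a trivial estimate is too large by powers of $\log x$ — near $s=1$ the integrand is genuinely of size $\gg x$ and does not decay — so one must integrate by parts in $t$, the factor $x^{it}$ contributing $(i\log x)^{-1}$ each time. Carrying out enough integrations by parts, their number dictated by the $C^J$-regularity of $\CR$ and by the relative sizes of $A$, $J$, $\Re(\alpha)$ and $k$, and feeding in the bounds of Step 1 for $\tfrac{d^l}{dt^l}\!\big((1+it)^{-\alpha}\CR(1+it)\big)$ near $t=0$ and for the $t$-derivatives of $F(s)/s$ away from $s=1$, one arrives at $\CI\ll x(\log x)^{k-1-A}(\log\log x)^{{\bf1}_{A=J+1}}$, which is the theorem. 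This last step is the crux, and the reason the classical argument has to be replaced: the saving comes \emph{only} from the oscillation of $x^{it}$, and quantifying it forces the order of integration by parts to be tuned to the precise $C^J$-smoothness of $\CR$ — which is also where the factor $\log\log x$ at $A=J+1$ and the dependence of the implied constant on $\mathrm{dist}(A,\Z)$ enter. (An alternative, contour-free route would induct on $J$ using $f(n)\log n=(\Lambda_f*f)(n)$, each step trading \eqref{f-alpha} of exponent $A$ for the same statement of exponent $A-1$; but keeping the error terms under control along the induction appears to demand essentially the same oscillatory input.)
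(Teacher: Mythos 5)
Your Steps 1--2 are essentially sound (they correspond to the paper's Lemma \ref{L-bound}(a),(b) and Lemma \ref{hankel}), but the crux, Step 3, is only asserted, and as described it cannot deliver the stated error term. The problem is quantitative: under \eqref{f-alpha} alone, the only control you have on $F$ away from $s=1$ is of the type of Lemma \ref{L-bound}(d), namely $F^{(\ell)}(1+1/\log x+it)\ll |t|^{\ell/A}(\log x)^{k}$ for $|t|\ge 1$ --- each $t$-derivative costs a factor $|t|^{1/A}$ (or $\log x$), and this is essentially optimal given only an error term $x/(\log x)^A$ in \eqref{f-alpha}. So each integration by parts against $x^{it}$ gains $(\log x)^{-1}$ but costs $|t|^{1/A}$, and the gain disappears entirely once $|t|\gtrsim(\log x)^{A}$; meanwhile the truncated-Perron error forces $T\gtrsim(\log x)^{A+1}$ if you want an admissible error $x(\log x)^{k-1-A}$, and on the range $(\log x)^{A}\lesssim|t|\le T$ you are left with the trivial bound, which contributes roughly $x(\log x)^{k}$. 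Relatedly, your recorded bound $|F(s)|\ll(\log(2+|t|))^{O(k)}$ on $\Re(s)=1+1/\log x$ is unjustified and false in general here: \eqref{f-alpha} only gives $\sum_p(f(p)-\alpha)p^{-s}\ll 1+|t|$, there is no zero-free-region or positivity input, and $|F|$ near the $1$-line can be as large as a power of $\log x$ at moderate heights. This is exactly the obstruction the paper runs into: its own optimized Perron argument (Theorem \ref{SD-weak}, which plays your oscillation card by differentiating $\approx 2\ell$ times and truncating at $T=(\log x)^{\ell}$) only reaches the error $x(\log x)^{k-\ell}$ with $\ell<A/2$, i.e.\ about half the claimed saving, and no tuning of the integration-by-parts order fixes this.

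The full strength is obtained in the paper by a genuinely different mechanism, which your closing parenthesis gestures at but wrongly dismisses: set $d=f-g$ where $g(n)$ is the expected density $\sum_{j\le J}c_j(\log n)^{\alpha-1-j}/\Gamma(\alpha-j)$, and run an induction on dyadic scales using $d\log = d*\Lambda_f+h$ with $h=\Lambda_f*g-g\log$ (Sections \ref{sec0} and \ref{proof}). The analytic input there is \emph{not} oscillation of $x^{it}$ along the whole $1$-line: Lemma \ref{SD-lem2} uses a smooth weight whose Mellin transform decays like $(1+|t|\log x)^{-j}$ for any fixed $j$, so only the range $|t|\le(\log x)^{A/(J+2)-1}$ matters, precisely where Lemma \ref{L-bound}(b) gives the needed $C^{J}$/H\"older control of $(s-1)^\alpha F(s)$ at $s=1$; the large-$|t|$ region, which sinks your contour argument, never has to be confronted. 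Lemma \ref{SD-lem3} plus the bootstrapping over scales then removes the loss that a one-shot Perron formula inevitably incurs. So the gap in your proposal is not a missing technicality but the central difficulty of the theorem, and closing it requires replacing Step 3 by an argument of this elementary, iterative kind.
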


We will demonstrate Theorem \ref{SD} in three successive steps, each one improving upon the previous one, carried out in Sections \ref{perron}, \ref{sec0} and \ref{proof}, respectively. Section \ref{aux} contains some preliminary results.

In Section \ref{best-possible}, we will show that there are examples of such $f$ with a term of size  $\gg x (\log x)^{\Re(\alpha)-1-A}$ in their asymptotic expansion, for  arbitrary $\alpha\in\C\setminus\Z_{\le0}$ and arbitrary positive non-integer $A>|\alpha|-\Re(\alpha)$. We deduce in Corollary \ref{best possible2} that  the error term in \eqref{asymp2}  is therefore best possible when $\alpha=k$ is a positive real number, and $A$ is not an integer.

The condition $|f|\le\tau_k$ can be relaxed significantly, but at the cost of various technical complications. We discuss such an improvement in Section \ref{divisor-bound}.

%
%Theorem \ref{SD} is of interest to better appreciate what ingredients go in to proving LSD-type results, which fits well with the recent development of the ``pretentious'' approach to analytic number theory in which one does not assume  the analytic continuation of $F(s)$. The estimate \eqref{f-alpha} corresponds to the zero-free region $\{ \beta+it:\ \beta>1-1/(|t|+2)^{1/A+o(1)} \}$ for $F(s)$ (as well as a lower bound on $F(1+it)$) provided it can be analytically continued; and examples in which this is the best result known can be found, for instance, in   \cite{GL} and the appendix by Farrell Brumley in \cite{Lap}.

\medskip

Theorem \ref{SD} is of interest to better appreciate what ingredients go in to proving LSD-type results, which fits well with the recent development of the ``pretentious'' approach to analytic number theory in which one does not assume  the analytic continuation of $F(s)$. In certain cases, conditions of the form \eqref{f-alpha} are the best we can hope for. This is the case when $F(s)=L(s)^{1/2}$, where $L(s)$ is an $L$-function for which we only know a zero-free region of the form $\{s=\sigma+it \,:\, \sigma>1-1/(|t|+2)^{1/A+o(1)} \}$. Examples in which this is the best result known can be found, for instance, in the paper of Gelbart and Lapid \cite{GL}, and in the appendix by Brumley in \cite{Lap}.

\medskip

Wirsing, in the series   \cite{W1,W2},  obtained estimates for the partial sums of $f$ under the weaker hypothesis $\sum_{p\le x} (f(p)-\alpha)=o(x/\log x)$ as $x\to\infty$, together with various technical conditions ensuring that the values of $f(p)/\alpha$ are restricted in an appropriate part of the complex plane (these conditions are automatically met if $f\ge0$, for example). Since Wirsing's hypothesis is  weaker than  \eqref{f-alpha}, his estimate on the partial sums of $f$ is weaker than Theorem \ref{SD}. The methods of Section \ref{sec0} and \ref{proof} bear some similarity with Wirsing's arguments.

\medskip

%%%%%%%%%%%%%%%%%%%%%%%%%%%%%%%%%%%%%%%%%%%%%%%%%%%%%%%%%%%%%%%%%%%%%%%%%%%%%%%%%%%%%%%%%%%%%%%%%%%%%%%%%%%%%%%%%%%%%%%%

\section{Initial preparations}\label{aux}

Let $f$ be as in the statement of Theorem \ref{SD}. Note that $|\alpha|\le k$. All implied constants here and for the rest of the paper might depend  without further notice on $k$, $A$, and the implicit constant in \eqref{f-alpha}. The dependence on $A$ comes from both its size, and its distance from the nearest integer.

\medskip

The first thing we prove is our claim that $F(s)(s-1)^\alpha$ has a $C^J$-continuation to the half-plane $\Re(s)\ge1$. To see this, we introduce the function $\tau_f$ whose Dirichlet series is given by $\prod_p (1-1/p^s)^{-f(p)}$, so that $f(p^\nu)=\binom{f(p)+\nu-1}{\nu}$ for all primes $p$ and all $\nu\ge1$. We also write $f=\tau_f*R_f$ and note that $R_f$ is supported on square-full integers and satisfies the bound $|R_f| = |f*\tau_{-f}|\le \tau_{2k}$. If $F_1$ and $F_2$ denote the Dirichlet series of $\tau_f$ and $R_f$, respectively, then $F_2(s)$ is analytic for $\Re(s)>1/2$. Hence our claim that $F(s)(s-1)^\alpha$ has a $C^J$-continuation to the half-plane $\Re(s)\ge1$ is reduced to the same claim for the function $F_1(s)(s-1)^\alpha$. This readily follows by \eqref{f-alpha} and partial summation, since
\eq{logF}{
\log[F_1(s)(s-1)^\alpha] = \sum_{p,\, \nu\ge1} \frac{f(p)-\alpha}{\nu p^{\nu s}} 
	+ \alpha \log[\zeta(s)(s-1)] .
}
  
Next, we simplify the functions $f$ we will work with. Define the function $\Lambda_f$ by the convolution formula 
\[
f\log=f*\Lambda_f.
\]
We claim that we may assume that $f=\tau_f$. Indeed, for the function $\tau_f$ introduced above, we have that $\Lambda_{\tau_f}(p^\nu) = f(p)\log p$\,; in particular, $|\Lambda_{\tau_f}|\le k\Lambda$. Moreover, if we assume that Theorem \ref{SD} is true for $\tau_f$, then we may easily deduce it for $f$: since $R_f$ is supported on square-full integers and satisfies the bound $|R_f|\le \tau_{2k}$, we have
\[
\sum_{n\le x}f(n) 
	= \sum_{ab\le x}\tau_f(a) R_f(b)
	=\sum_{b\le (\log x)^C} R_f(b) \sum_{a\le x/b} \tau_f(a)
	+ O( x(\log x)^{k-1-A} )  
\]
for $C$ big enough. Now, if Theorem \ref{SD} is true for $\tau_f$, then it also follows for $f$, since
\als{
\sum_{b\le (\log x)^C} \frac{R_f(b)}{b} \cdot \frac{\log^{\alpha-j-1}(x/b)}{\Gamma(\alpha-j)} 
	&=  \sum_{\ell=0}^J \frac{(\log x)^{\alpha-j-\ell-1}}{\ell!\Gamma(\alpha-j-\ell)} 
		\sum_{b\le (\log x)^C} \frac{R_f(b)(-\log b)^\ell}{b} \\
	&\qquad+ O((\log x)^{k-1-A}) \\
	&=  \sum_{\ell=0}^J \frac{(\log x)^{\alpha-j-\ell-1}}{\Gamma(\alpha-j-\ell)} \cdot \frac{F_2^{(\ell)}(1)}{\ell!}
			+ O((\log x)^{k-1-A})  
}
if $C$ is large enough. From now on, we therefore assume, without loss of generality, that $f=\tau_f$ so that the values of $f$ at $f(p^k)$ is determined by its value at $f(p)$, and in particular $|\Lambda_f|\le k\Lambda$. 

\medskip

Consider, now, the functions $Q(s):=F(s)(s-1)^\alpha$ and $\tilde{Q}(s) = Q(s)/s$. As we saw above, they both have a $C^J$-continuation to the half-plane $\Re(s)\ge1$. In particular, if $c_j$ and $\tilde{c}_j$ are given by \eqref{taylor-coeffs}, then for each $\ell\le J$ we have
\[
Q(s) = \sum_{j=0}^{\ell-1} c_j (s-1)^j + \frac{(s-1)^\ell}{(\ell-1)!} \int_0^1 Q^{(\ell)}(1+(s-1)u)(1-u)^{\ell-1}\dee u  .
\]
and
\[
\tilde{Q}(s) = \sum_{j=0}^{\ell-1} \tilde{c}_j (s-1)^j + \frac{(s-1)^\ell}{(\ell-1)!} \int_0^1 \tilde{Q}^{(\ell)}(1+(s-1)u)(1-u)^{\ell-1}\dee u  .
\]
To this end, we introduce the notations
\[
G_\ell(s) =  \sum_{j=0}^{\ell-1} c_j (s-1)^{j-\alpha} 
\quad\text{and}\quad 
\tilde{G}_\ell(s) =  \sum_{j=0}^{\ell-1} \tilde{c}_j (s-1)^{j-\alpha} ,
\]
as well as the ``error terms''
\eq{E}{
E_\ell(s) = F(s)-G_\ell(s) 
	= \frac{(s-1)^{\ell-\alpha}}{(\ell-1)!} \int_0^1 Q^{(\ell)}(1+(s-1)u)(1-u)^{\ell-1}\dee u ,
}
and
\eq{E2}{
\tilde{E}_\ell(s) = \frac{F(s)}{s} - \tilde{G}_\ell(s) 
	= \frac{(s-1)^{\ell-\alpha}}{(\ell-1)!} \int_0^1 \tilde{Q}^{(\ell)}(1+(s-1)u)(1-u)^{\ell-1}\dee u .
}
We have the following lemma:

\begin{lem}\label{L-bound} Let $f$ be a multiplicative function such that $f=\tau_f$ and for which \eqref{f-alpha} holds. Let also $s=\sigma+it$ with $\sigma>1$.
\begin{enumerate}[nolistsep]
\item Let $\ell\le J$, $m\ge0$, and $|s-1|\le2$. Then
\[
E_\ell^{(m)}(s) ,\, \tilde{E}_\ell^{(m)}(s) \ll  |s-1|^{\ell-\Re(\alpha)} (\sigma-1)^{-m} .
\]
\item Let $|s-1|\le2$ and $|t|\le (\sigma-1)^{1-\frac{A}{J+1}}/(-\log(\sigma-1))$. Then
\[
E_{J+1}^{(m)}(s) ,\, \tilde{E}_{J+1}^{(m)}(s) \ll |s-1|^{J+1-\Re(\alpha)} (\sigma-1)^{-(m+J+1-A)} (-\log(\sigma-1))^{{\bf1}_{A=J+1}}.
\]
\item Let $\ell\le J/2$, $m\ge0$, and $|s-1|\le2$. Then
\[
E_\ell^{(m+\ell)}(s)   ,\, \tilde{E}_\ell^{(m+\ell)}(s) \ll |s-1|^{-\Re(\alpha)} (\sigma-1)^{-m} \le 4^k (\sigma-1)^{-m-k} .
\]
\item Let $|t|\ge1$, $\ell\le J$, and $m\ge0$. Then
\[
F^{(m+\ell)}(s) \ll |t|^{\ell/A} (\sigma-1)^{-m-k} .
\]
\end{enumerate}
All implied constants depend at most on $k$, $A$ and the implicit constant in \eqref{f-alpha}. The dependence on $A$ comes from both its size, and its distance from the nearest integer.
\end{lem}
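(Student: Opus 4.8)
The plan is to estimate everything through the logarithmic derivative of $Q(s)=F(s)(s-1)^\alpha$, which by \eqref{logF} (and the reduction $f=\tau_f$) equals $\sum_{p,\nu\ge1}(f(p)-\alpha)/(\nu p^{\nu s})+\alpha\log[\zeta(s)(s-1)]$. Differentiating $\log Q$ in $s$ and then invoking \eqref{f-alpha} with partial summation, one controls $Q^{(m)}(s)/Q(s)$ — equivalently the derivatives of $\log Q$ — on the relevant ranges; combined with the trivial bound $|Q(s)|\ll 1$ coming from $|f|\le\tau_k$ (so $|F(s)|\le\zeta(\sigma)^k$ and $|(s-1)^\alpha|\ll 1$ on $|s-1|\le 2$), this yields pointwise bounds on $Q^{(m)}(s)$ and hence, via the identical argument for $\tilde Q=Q/s$, on $\tilde Q^{(m)}(s)$. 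Parts (a)–(c) then follow by inserting these into the integral representations \eqref{E}, \eqref{E2}: for (a) one takes $\ell$ derivatives through the integral sign, each hitting either the prefactor $(s-1)^{\ell-\alpha}$ or the integrand $Q^{(\ell)}(1+(s-1)u)$, and uses that $|1+(s-1)u-1|=u|s-1|$ together with $\Re(1+(s-1)u)-1=u(\sigma-1)$, so that a derivative of the integrand costs at most $(u(\sigma-1))^{-1}$, which integrates against $(1-u)^{\ell-1}$ to something $\ll(\sigma-1)^{-1}$ per derivative after absorbing the $u^{-1}$ into the measure.

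For part (b) the gain from $k-1$ to $k-1-A$ must come from using \eqref{f-alpha} more carefully rather than trivially: the Dirichlet series $\sum_p(f(p)-\alpha)\log p\,/p^s$ is, by partial summation on \eqref{f-alpha}, analytic and bounded by $O(1)$ in the region $\sigma>1$, $|t|\le(\sigma-1)^{1-A/(J+1)}/(-\log(\sigma-1))$ — this is exactly the region where the error term $x/(\log x)^A$ in \eqref{f-alpha}, fed through the Mellin transform $\int_1^\infty (\cdot) x^{-s}\,dx/x$, stays under control, and the exponent $1-A/(J+1)$ is tuned so that the contribution of the tail balances. Differentiating $J+1$ times brings in a factor that is $\ll(\sigma-1)^{-(J+1-A)}(-\log(\sigma-1))^{{\bf 1}_{A=J+1}}$ on this region (the logarithm appearing precisely in the boundary case $A=J+1$ where $J+1-A=0$ and the bound is logarithmic rather than a clean power), and then one proceeds as in (a). Part (c) is the easy regime: when $\ell\le J/2$ one may afford to throw away all cancellation, bound $Q^{(\ell+m)}$ trivially using only $|f|\le\tau_k$, and note that $(s-1)^{-\alpha}$ contributes $|s-1|^{-\Re(\alpha)}\le 2^{-\Re(\alpha)}\le 2^k$ and $|s-1|^{-\Re(\alpha)}\le(\sigma-1)^{-k}$ when $|s-1|\le 2$, so $4^k(\sigma-1)^{-m-k}$ is immediate.

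Part (d) is the one genuinely different estimate, for large $|t|$: here $(s-1)^\alpha$ is no longer $O(1)$, and more importantly one needs a bound on $F^{(m+\ell)}(s)$ itself (not on $E_\ell$), valid up to $|t|$ large. The idea is to write $F(s)=Q(s)(s-1)^{-\alpha}$, bound $(s-1)^{-\alpha}\ll |t|^{|\Im(\alpha)|}(\sigma-1)^{-\Re(\alpha)}$ trivially, and — more delicately — bound the derivatives of $Q$ on this range; the factor $|t|^{\ell/A}$ suggests that one is allowed to use \eqref{f-alpha} only up to height roughly $|t|^{1/A}$ in the Dirichlet-series estimate for $(\log Q)'$, losing a power of $|t|^{1/A}$ for each of the $\ell$ derivatives one commits, while the remaining $m$ derivatives are taken trivially at cost $(\sigma-1)^{-m}$ each; the $(\sigma-1)^{-k}$ then collects $(\sigma-1)^{-\Re(\alpha)}$ from $(s-1)^{-\alpha}$ and $(\sigma-1)^{-(k-\Re(\alpha))}$ slack from the crude bounds, using $\Re(\alpha)\le k$. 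The main obstacle, and the step deserving the most care, is part (b): getting the region of validity $|t|\le(\sigma-1)^{1-A/(J+1)}/(-\log(\sigma-1))$ exactly right, and tracking how the weak error term $(\log x)^{-A}$ in \eqref{f-alpha} propagates through partial summation into an analytic bound that degrades like $(\sigma-1)^{-(J+1-A)}$ rather than $(\sigma-1)^{-(J+1)}$ — in particular handling the boundary case $A=J+1$ where a logarithm appears. Everything else is a matter of differentiating integral representations and bookkeeping with $|s-1|$ versus $\sigma-1$.
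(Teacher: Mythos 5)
Your outline of parts (a) and (c) is essentially the paper's argument: in both cases one first uses the integral remainder formula \eqref{E} to bound $E_\ell$ or $E_\ell^{(\ell)}$ (using only derivatives $Q^{(j)}$ with $j\le J$, hence $\ll 1$), and then passes to higher $m$ by Cauchy's integral formula on a circle of radius $(\sigma-1)/2$ around $s$. Your description of differentiating under the integral sign and absorbing the factor $u^{-1}$ into the chain-rule factor $u$ amounts to the same thing, and the bookkeeping checks out. But parts (b) and (d) have real gaps.

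For (b), you correctly single this out as the hard case, and you correctly identify the Laplace/Mellin-transform representation of $\log Q_1$ and the tuning of the region; but the phrase ``one proceeds as in (a)'' cannot be made to work. In (a) one invokes \eqref{E} with $\ell$ derivatives, and for $\ell=J+1$ this would require $Q^{(J+1)}$ on the closed half-plane $\Re(s)\ge 1$, where $Q$ is only $C^J$. The paper's actual move is to avoid Taylor-expanding $Q$ entirely: it writes $\log Q_1(s)=s\int_1^\infty \bigl(R(e^w)/e^w\bigr)e^{-w(s-1)}\,\dee w$ with $R(x)=\sum_{3<p\le x}(f(p)-\alpha)\ll x/(\log x)^{A+1}$, Taylor-expands the \emph{entire} kernel $e^{-w(s-1)}$ to order $J+1$ (legitimate for any order since the kernel is analytic), obtains an expansion $\log Q_1(s)=\sum_{j\le J}c_j'(s-1)^j+O\bigl(|s-1|^{J+1}(\sigma-1)^{A-J-1}(-\log(\sigma-1))^{{\bf 1}_{A=J+1}}\bigr)$, verifies that the $|t|$-constraint makes this error $\ll 1$, and then \emph{exponentiates}. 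Your sketch does not produce the expansion $\sum_{j\le J}c_j'(s-1)^j$, which is indispensable: without it one cannot compare $Q$ to $G_{J+1}$ and thereby bound $E_{J+1}=F-G_{J+1}$. Also, your claim that $\sum_p(f(p)-\alpha)\log p/p^s=O(1)$ in the stated region is not needed and is in fact false for $0<A<1$ (it is $\asymp(\sigma-1)^{A-1}$ near $s=1$); what is uniformly bounded is $\log Q_1$ itself, because $R(x)\ll x/(\log x)^{A+1}$ carries an extra power of $\log$.

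For (d) your approach is wrong. You write $F=Q(s-1)^{-\alpha}$ and propose to bound derivatives of $Q$ for $|t|\ge 1$, but the bounds $Q^{(j)}\ll 1$ were established only on $\{\Re(s)\ge 1,\ |s-1|\le 2\}$; for $|t|>2$ you have no control on $Q$ or its derivatives, and indeed $|Q(s)|=|F(s)||s-1|^{\Re(\alpha)}e^{-\Im(\alpha)\arg(s-1)}$ can grow like a power of $|t|$. The attribution ``$(\sigma-1)^{-k}$ collects $(\sigma-1)^{-\Re(\alpha)}$ from $(s-1)^{-\alpha}$'' is also incorrect: for $|t|\ge 1$ one has $|s-1|\ge 1$ and $(s-1)^{-\alpha}$ contributes at most a bounded power of $|t|$, not a negative power of $\sigma-1$. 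The paper's argument never touches $Q$ in part (d). It works with $F'/F=-\sum_n\Lambda_f(n)n^{-s}$ directly: partial summation with the truncation $\log N=|t|^{1/A}$ gives $(F'/F)^{(j-1)}(s)\ll|t|^{j/A}$; the combinatorial identity expressing $F^{(\ell)}/F$ as a sum of products $(F'/F)^{(j_1)}\cdots(F'/F)^{(j_\ell)}$ with $j_1+\cdots+j_\ell=\ell$ then gives $F^{(\ell)}/F\ll|t|^{\ell/A}$; multiplying by the single divisor-type bound $|F(s)|\le\zeta(\sigma)^k\ll(\sigma-1)^{-k}$ gives $F^{(\ell)}(s)\ll|t|^{\ell/A}(\sigma-1)^{-k}$; the remaining $m$ derivatives again come from Cauchy. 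This combinatorial step is essential to keep the power of $|t|$ at $\ell/A$ rather than something worse, and it is missing from your sketch.
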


\begin{proof} Note that the functions $E_\ell(z)$ and $\tilde{E}_\ell(z)$ are holomorphic in the half-plane $\Re(z)>1$. In particular, they satisfy Cauchy's residue theorem in this region.

\medskip

(a)  From \eqref{logF} and \eqref{f-alpha}, we readily see that $Q^{(\ell)}(s) \ll 1$ uniformly when $\Re(s)\ge1$ and $|s-1|\le2$. Using the remainder formula \eqref{E}, we thus find that $E_\ell(s)\ll |s-1|^{\ell-\Re(\alpha)}$ when $\ell\le J$, $\Re(s)\ge1$ and $|s-1|\le2$. Thus Cauchy's residue theorem implies that
\eq{reduce to m=0}{
E_\ell^{(m)}(s) = \frac{m!}{2\pi i}\int_{|w|=(\sigma-1)/2} \frac{E_\ell(s+w)}{w^{m+1}}\dee w 
	&\ll  |s-1|^{\ell-\Re(\alpha)} (\sigma-1)^{-m} 
}
for $|s-1|\le2$, since $|s-1|/2\le |s-1+w|\le 3|s-1|/2$ when $|w|=(\sigma-1)/2\le |s-1|/2$. The bound for $\tilde{E}^{(m)}_\ell(s)$ is obtained in a similar way.

\medskip

(b) As in part (a), we focus on the claimed bound on $E_{J+1}^{(m)}(s)$, with the corresponding bound for $\tilde{E}^{(m)}_{J+1}(s)$ following similarly. Moreover, by the first relation in \eqref{reduce to m=0} with $\ell=J+1$, it is clear that is suffices to show the required bound on  $E_{J+1}^{(m)}(s)$ when $m=0$. 

Estimating $E_{J+1}(s)$ is trickier than estimating $E_\ell(s)$ with $\ell\le J$, because we can longer use Taylor's expansion for $Q$, as we only know that $Q$ is $J$ times differentiable. Instead, we will show that there are coefficients $c_0',c_1',\dots,c_J'$ independent of $s$ such that
\eq{taylorJ+1}{
\log Q(s) = \sum_{j=0}^J c_j' (s-1)^j + O(|s-1|^{J+1}(\sigma-1)^{A-J-1}(-\log(\sigma-1))^{{\bf1}_{J=A-1}}) 
}
when $|s-1|\le 2$. Notice that for $s$ as in the hypotheses of part (b), the error term is $\ll1$, so that the claimed estimate for $E_{J+1}(s)$ readily follows when $m=0$ by exponentiating \eqref{taylorJ+1} and multiplying the resulting asymptotic formula by $(s-1)^{-\alpha}$.

By our assumption that $|\Lambda_f|\le k\Lambda$, we may write $Q(s) = Q_1(s)Q_2(s)$, where $\log Q_1(s)=\sum_{p>3} (f(p)-\alpha)/p^s$ and $Q_2(s)$ is analytic and non-vanishing for $\Re(s)>1/2$ with $|s-1|\le2$. Thus, it suffices to show that $\log Q_1(s)$ has an expansion of the form \eqref{taylorJ+1}. Set $R(x)=\sum_{3<p\le x} (f(p)-\alpha) \ll x/(\log x)^{A+1}$ and note that
\[
\log Q_1(s) = s \int_e^\infty \frac{R(x)}{x^{s+1}} \dee x
	= s \int_1^\infty \frac{R(e^w)}{e^w} \cdot \frac{\dee w}{e^{w(s-1)}} . 
\]
Using Taylor's theorem, we find that
\[
e^{-w(s-1)} = \sum_{j=0}^J \frac{(w(1-s))^j}{j!} + \frac{(w(1-s))^{J+1}}{J!} \int_0^1 e^{-uw(s-1)} (1-u)^J \dee u ,
\]
so that
\als{
\log Q_1(s) = \sum_{j=0}^J \frac{s(1-s)^j}{j!} \int_1^\infty \frac{R(e^w)w^j}{e^w} \dee w 
	+ \frac{s(1-s)^{J+1}}{J!} \int_0^1 (1-u)^J \int_1^\infty \frac{R(e^w)w^{J+1}}{e^{w+uw(s-1)}}\dee w\dee u .
}
The last term is
\als{
&\ll |s-1|^{J+1} \int_0^1 \int_1^\infty \frac{w^{J-A}}{e^{uw(\sigma-1)}} \dee w \dee u  \\
&\ll |s-1|^{J+1} \int_0^1 (u(\sigma-1))^{A-J-1} \left(\log \frac{1}{u(\sigma-1)}\right)^{{\bf1}_{A=J+1}} \dee u \\
&\ll |s-1|^{J+1} (\sigma-1)^{A-J-1} \left(\log \frac{1}{\sigma-1}\right)^{{\bf1}_{A=J+1}} 
}
as needed, since $0<A-J\le 1$. This completes the proof of part (b) by taking
\[
c_j' = \frac{(-1)^j}{j!} \int_1^\infty \frac{R(e^w)w^j}{e^w} \dee w 
	+\frac{{\bf1}_{j\ge1}(-1)^{j-1}}{(j-1)!} \int_1^\infty \frac{R(e^w)w^{j-1}}{e^w} \dee w .
\]

\medskip

(c) Since $2\ell\le J$, we have that $Q^{(\ell+j)}(w)\ll1$ when $j\le\ell$ and $w\in\{z\in\C:\Re(z)\ge1,\,|z-1|\le2\}$. Differentiating the formula in \eqref{E} $\ell$ times, we thus conclude that
\[
E_\ell^{(\ell)}(s) \ll \sum_{j=0}^\ell |s-1|^{-\Re(\alpha)+\ell-j}\int_0^1 |Q^{(\ell+j)}(1+(s-1)u)| u^j(1-u)^{\ell-1}\dee u
	\ll |s-1|^{-\Re(\alpha)} .
\]
Since $|\alpha|\le k$ and $|s-1|\le2$, we find that $|s-1|^{k-\Re(a)}\le 2^{2k}$, whence
\[
|s-1|^{-\Re(\alpha)} \le 4^k |s-1|^{-k} \le 4^k(\sigma-1)^{-k}.
\]
The bound on $E_\ell^{(\ell+m)}(s)$ then by the argument in \eqref{reduce to m=0} with $E_\ell(s+w)$ replaced by $E_\ell^{(\ell)}(s+w)$. We argue similarly for the bound on $\tilde{E}^{(\ell+m)}_\ell(s)$.

\medskip

(d) Let $|t|\ge1$, and $j\le J$, and fix for the moment some $N\ge1$. Summation by parts implies and \eqref{f-alpha} imply that
\als{
\left(\frac{F'}{F}\right)^{(j-1)}(s)
	&= \sum_p \frac{f(p)(-\log p)^j}{p^s}\\
	&= O((\log N)^j) +(-1)^j \int_N^\infty \frac{(\log y)^{j-1}}{y^s} \dee (\alpha y +O(y/(\log y)^A)) \\
	&= O((1+|t|/(\log N)^A)(\log N)^j) + (-1)^j \alpha \int_N^\infty \frac{(\log y)^{j-1}}{y^s} \dee y  .
}
Moreover, we have
\[
\int_N^\infty \frac{(\log y)^{j-1}}{y^s} \dee y
	= \int_1^\infty \frac{(\log y)^{j-1}}{y^s} \dee y +O((\log N)^j)
	= \frac{(j-1)!}{(s-1)^j} +O((\log N)^j)\ll (\log N)^j 
\]
for $|t|\ge1$. Taking $\log N=|t|^{1/A}$ yields the estimate
\[
\left(\frac{F'}{F}\right)^{(j-1)}(s) \ll |t|^{j/A} .
\]
Now, note that $F^{(\ell)}/F$ is a linear combination of terms of the form $(F'/F)^{(j_1)}\cdots (F'/F)^{(j_\ell)}$ with $j_1+\cdots+j_\ell=\ell$. This can be proven by induction on $\ell$ and by noticing that
\[
 \frac{F^{(\ell+1)}}{F}  = \left(\frac{F^{(\ell)}}{F}\right)' + \frac{F'}{F} \cdot \frac{F^{(\ell)}}{F} .
\]
We thus conclude that
\[
\frac{F^{(\ell)}}{F}(s) \ll  |t|^{\ell/A} 
\]
Additionally, since $|f|\le\tau_k$, we have that  $|F(s)|\le \zeta(\sigma)^k \ll 1/(\sigma-1)^k$, whence $F^{(\ell)}(s) \ll |t|^{\ell/A}(\sigma-1)^{-k}$. The claimed estimate on $F^{(\ell+m)}(s)$ then follows by the argument in \eqref{reduce to m=0} with $E_\ell(s+w)$ replaced by $F^{(\ell)}(s)$.
\end{proof}

Finally, in order to calculate the main term in Theorem \ref{SD}, we need Hankel's formula for $1/\Gamma(z)$:

\begin{lem}\label{hankel}
For $x\ge1$, $c>1$ and $\Re(z)>1$, we have
\[
\frac{1}{2\pi i}\int_{\Re(s)=c} \frac{x^{s-1}}{(s-1)^z} \dee s
	={\bf1}_{x>1}\cdot \frac{(\log x)^{z-1}}{\Gamma(z)} .
\]
\end{lem}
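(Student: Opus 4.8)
The plan is to change variables so as to reduce the identity to the classical Hankel integral for $1/\Gamma$, and then to move the line of integration onto a Hankel contour. First I would substitute $w=s-1$ and set $L=\log x\ge 0$, so that with $a:=c-1>0$ the claim becomes
\[
\frac{1}{2\pi i}\int_{\Re(w)=a}\frac{e^{Lw}}{w^z}\,\dee w \;=\; {\bf 1}_{L>0}\cdot\frac{L^{z-1}}{\Gamma(z)},
\]
where $w^{-z}$ is the principal branch (cut along the negative real axis). The integral converges absolutely exactly because $\Re(z)>1$: on $\Re(w)=a$ one has $|e^{Lw}w^{-z}|\ll_z |w|^{-\Re(z)}$, which is integrable at infinity.

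Next I would deform the line $\Re(w)=a$ onto a Hankel contour $\CH_\eps$ with a fixed small radius $\eps>0$: two rays running out to $-\infty$ just below and just above the negative real axis, joined across $w=0$ by the circle $|w|=\eps$ traversed counterclockwise. The integrand $e^{Lw}w^{-z}$ is holomorphic on $\C\setminus(-\infty,0]$, a region containing the closed domain bounded by the line and $\CH_\eps$; truncating both contours at radius $R$, joining them by arcs of $|w|=R$, and applying Cauchy's theorem, one gets that the two integrals agree. The arc contributions are $O\!\big(R\cdot e^{aL}\cdot R^{-\Re(z)}\big)\to 0$ as $R\to\infty$, again using $\Re(z)>1$ (together with $|e^{Lw}|\le e^{aL}$ on the part of the arc with $\Re(w)\le a$, which holds since $L\ge 0$), and the truncated line and Hankel integrals converge to the full ones for the same reason. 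It is essential that $\eps$ stay bounded away from $0$ throughout, since for $\Re(z)>1$ the small-circle contribution is of order $\eps^{1-\Re(z)}$ and $w^{-z}$ is not integrable at the origin.

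It remains to evaluate $\frac{1}{2\pi i}\int_{\CH_\eps}e^{Lw}w^{-z}\,\dee w$. When $x=1$ (so $L=0$) the three pieces of $\int_{\CH_\eps}w^{-z}\,\dee w$ — the two rays and the circle — combine to $0$ by a direct computation (or: the line integral of $w^{-z}$ is independent of $a>0$ and tends to $0$ as $a\to\infty$), which matches the right-hand side. When $x>1$ (so $L>0$) I would invoke Hankel's formula $\frac{1}{\Gamma(z)}=\frac{1}{2\pi i}\int_{\CH}e^{t}t^{-z}\,\dee t$, valid for all $z\in\C$ (the right-hand side is entire in $z$, equals $1/\Gamma(z)$ for $\Re(z)<1$ by collapsing the contour onto the cut, and satisfies the same recursion $H(z)=H(z-1)/(z-1)$ — obtained by integrating the contour integral by parts — as $1/\Gamma(z)$, hence agrees with it everywhere). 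The substitution $t=Lw$, which crosses no branch cut since $L>0$ is real, sends $\CH$ to the scaled Hankel contour $\CH/L$ and gives $\frac{L^{z-1}}{\Gamma(z)}=\frac{1}{2\pi i}\int_{\CH/L}e^{Lw}w^{-z}\,\dee w$; since any two Hankel contours give the same value, this equals the integral over $\CH_\eps$, finishing the proof.

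The one genuine subtlety — the main obstacle — is the hypothesis $\Re(z)>1$. It both supplies the absolute convergence on the line and forces the inner radius $\eps$ of the Hankel contour to remain positive, which is why one cannot simply run the textbook derivation of Hankel's formula (collapsing onto the negative real axis, valid only for $\Re(z)<1$) and must instead appeal to, or re-derive via the functional equation, the all-$z$ form of that formula. Once $\Re(z)>1$ is used in these places, the deformation and the arc estimates are entirely routine.
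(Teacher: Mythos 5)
Your proof is correct, but it takes a genuinely different route from the paper. You carry out the classical Hankel-contour argument: after the substitution $w=s-1$, you deform the vertical line onto a Hankel loop $\CH_\eps$, handle the arcs at infinity (using $\Re(z)>1$ for convergence there), and then for $x>1$ appeal to the all-$z$ form of Hankel's integral $\frac{1}{\Gamma(z)}=\frac{1}{2\pi i}\int_{\CH}e^{t}t^{-z}\,\dee t$, followed by the scaling $t=Lw$. You rightly flag the subtlety: because $\Re(z)>1$, the inner circle of the Hankel contour cannot be collapsed, so one needs the analytically continued version of Hankel's formula rather than the textbook derivation valid for $\Re(z)<1$. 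The paper sidesteps this entirely. Its primary proof observes that the right-hand side $f(x)={\bf 1}_{x>1}(\log x)^{z-1}/\Gamma(z)$ has Mellin transform $(-s)^{-z}$ on $\Re(s)<0$, and then applies Mellin inversion with the substitution $s\mapsto 1-s$; the hypothesis $\Re(z)>1$ is precisely what makes this Mellin transform absolutely invertible, so there is no contour deformation and no branch-point subtlety at all. The paper's alternative proof avoids even Mellin inversion, by first establishing the twice-integrated identity with the kernel $x^{s+1}/(s(s+1))$ via Fubini and the elementary Gamma integral $\int_1^\infty(\log y)^{z-1}y^{-s}\,\dee y=\Gamma(z)/(s-1)^z$, and then differentiating in $x$ twice. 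Your approach is closest in spirit to Landau's original method (and more self-contained if one is comfortable re-deriving the entire-function form of Hankel's formula); the paper's approaches trade the contour manipulation for the convergence theory of the Mellin transform, respectively for an integration-by-parts trick, and are somewhat quicker as a result. One small stylistic remark: your justification of the all-$z$ Hankel formula is slightly over-engineered — once you know the right-hand side is entire and agrees with $1/\Gamma(z)$ on the half-plane $\Re(z)<1$, the identity theorem already gives agreement everywhere, so the separate recursion argument via integration by parts is redundant (though not incorrect).
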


 \begin{proof} Let $f(x) = {\bf1}_{x>1}(\log x)^{z-1}/\Gamma(z)$ and note that its Mellin transform is
\[
F(s):= \int_0^\infty f(x) x^{s-1} \dee x =  (-s)^{-z}
\]
for $\Re(s)<0$. By Mellin inversion we then have that $f(x) = \frac{1}{2\pi i}\int_{\Re(s)=c} F(s) x^{-s} \dee s$ for $c<0$. Making the change of variables $s\to 1-s$ completes the proof.

\medskip

Alternatively, we may give a proof when $x>1$ that avoids the general Mellin inversion theorem. We note that it suffices to prove that
\eq{hankel-alternative}{
\frac{1}{2\pi i}\int_{\Re(s)=c} \frac{x^{s+1}}{s(s+1)(s-1)^z} \dee s
	= \frac{1}{\Gamma(z)} \int_1^x \int_1^u (\log y)^{z-1} \dee y\, \dee u\  ,
}
since the claimed formula will then follow by differentiating with respect to $x$ and then with respect to $u$, which can be justified by the absolute convergence of the integrals under consideration. 

Using the formula
\eq{gamma}{
\int_1^\infty \frac{(\log y)^{z-1}}{y^s} \dee y  = \frac{\Gamma(z)}{(s-1)^z},
}
valid for $\Re(s)>1$, we find that
\als{
\frac{1}{2\pi i}\int_{\Re(s)=c} \frac{x^{s+1}}{s(s+1)(s-1)^z} \dee s
	&= \frac{x}{2\pi i}\int_1^\infty  \frac{(\log y)^{z-1}}{\Gamma(z)} \int_{\Re(s)=c} \frac{(x/y)^s}{s(s+1)} \dee s\,\dee y \\
	&= \frac{1}{\Gamma(z)} \int_1^x (\log y)^{z-1} (x-y) \dee y .
}
Since $x-y=\int_y^x \dee u$, relation \eqref{hankel-alternative} follows.
\end{proof}

\section{Using Perron's formula}\label{perron}

In this section, we prove a weak version of Theorem \ref{SD} using Perron's formula:

%%%%%%%%%%%%%%%%%%%%%%%%%%%%%%%%%%%%%%%%%%%%%%%%%%%%%%%%%%%%%%%%%%%%%%%%%%%%%%%%%%%%%%%%%%%%%%%%%%%%%%%%%%%%%%%%%%%%%%%%

\begin{thm}\label{SD-weak} Let $f$ be a multiplicative function satisfying \eqref{f-alpha} and such that $|f|\le\tau_k$ for some positive real number $k$. If $\ell$ is the largest integer $<A/2$, and the coefficients $c_j$ and $\tilde{c}_j$ are defined by \eqref{taylor-coeffs}, then
\al{
\sum_{n\le x} f(n) 
	&= \int_2^x  \sum_{j=0}^{\ell-1} c_j  \frac{ (\log y)^{\alpha-j-1} } {\Gamma(\alpha-j)}\dee y
		+  O(x (\log x)^{k-\ell})  \label{asymp-weak} \\
	&= x \sum_{j=0}^{\ell-1} \tilde{c}_j  \frac{ (\log x)^{\alpha-j-1} } {\Gamma(\alpha-j)}
		+  O(x (\log x)^{k-\ell}) \label{asymp-weak2}   \ .
}
The implied constants depend at most on  $k$, $A$, and the implicit constant in \eqref{f-alpha}. 
%The dependence on $A$ comes from both its size, and its distance from the nearest integer.
\end{thm}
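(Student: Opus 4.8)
The whole argument goes through Perron's formula. By the reductions of Section~\ref{aux} I may assume $f=\tau_f$, so that $|\Lambda_f|\le k\Lambda$ and $|f|\le\tau_k$; after the usual reduction I may also assume that $x$ is a half-integer. Set $c=1+1/\log x$ and let $T=(\log x)^\theta$ with $\theta$ a fixed exponent of size $\asymp\ell$. The starting point is the truncated Perron formula
\[
\sum_{n\le x}f(n)=\frac{1}{2\pi i}\int_{c-iT}^{c+iT}F(s)\,\frac{x^s}{s}\,\dee s+O\!\bigl(x(\log x)^{k-\ell}\bigr),
\]
where the error is controlled by $\sum_{n\le y}\tau_k(n)\ll y(\log y)^{k-1}$, $\zeta(c)^k\ll(\log x)^k$, and a dyadic decomposition of the terms with $n$ close to $x$; this is what forces $\theta\ge\ell$.

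To extract the main term I would write $F(s)/s=\tilde G_\ell(s)+\tilde E_\ell(s)$ as in \eqref{E2}, fix a large integer $m$ (large in terms of $k$ and $A$), and integrate by parts $m$ times in $s$ in both summands, using $x^s\,\dee s=\dee(x^s/\log x)$ at each step. For the $\tilde G_\ell$-summand this turns each $\frac{1}{2\pi i}\int_{c-iT}^{c+iT}(s-1)^{j-\alpha}x^s\,\dee s$ into an integral whose integrand decays like $|t|^{j-\Re\alpha-m}$, so that Hankel's formula (Lemma~\ref{hankel}) applies to the completed line integral; after collapsing the $\Gamma$-factor one recovers exactly $x\sum_{j=0}^{\ell-1}\tilde c_j(\log x)^{\alpha-j-1}/\Gamma(\alpha-j)$, together with the boundary terms $\sum_{i<m}\tfrac{(-1)^i}{(\log x)^{i+1}}\bigl[\tilde G_\ell^{(i)}(s)\,x^s\bigr]_{c-iT}^{c+iT}$ and a tail that is negligible because $m$ is large. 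For the $\tilde E_\ell$-summand the same $m$ integrations by parts give $\sum_{i<m}\tfrac{(-1)^i}{(\log x)^{i+1}}\bigl[\tilde E_\ell^{(i)}(s)\,x^s\bigr]_{c-iT}^{c+iT}+\tfrac{(-1)^m}{(\log x)^m}\int_{c-iT}^{c+iT}\tilde E_\ell^{(m)}(s)\,x^s\,\dee s$.

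The decisive point is that the boundary terms of the two summands recombine into $\sum_{i<m}\tfrac{(-1)^i}{(\log x)^{i+1}}\bigl[(F/s)^{(i)}(s)\,x^s\bigr]_{c-iT}^{c+iT}$. Individually these can be far larger than $x(\log x)^{k-\ell}$ — when $\Re\alpha$ is very negative $\tilde G_\ell^{(i)}$ blows up at height $T$ — but in their sum only the derivatives of $F/s$ survive, and by Leibniz's rule together with Lemma~\ref{L-bound}(d) these are $\ll(\log x)^kT^{i/A-1}$ at height $T$, so the entire boundary contribution is $\ll x(\log x)^{k-1-\ell}$. It then remains to bound $\frac{1}{(\log x)^m}\int_{c-iT}^{c+iT}\tilde E_\ell^{(m)}(s)x^s\,\dee s$. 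On the part with $|s-1|\le2$ I would invoke Lemma~\ref{L-bound}(c), legitimate since $\ell$ is the largest integer below $A/2$ and hence $2\ell\le J$, which gives $\tilde E_\ell^{(m)}(s)\ll|s-1|^{-\Re\alpha}(\sigma-1)^{-(m-\ell)}$, so that the factor $(\log x)^{-m}$ leaves a clean $(\log x)^{-\ell}$ and this part is $\ll x(\log x)^{k-\ell}$. On the part with $|s-1|>2$ I would split $\tilde E_\ell^{(m)}=(F/s)^{(m)}-\tilde G_\ell^{(m)}$, bounding the first term by Lemma~\ref{L-bound}(d) and the second by its explicit expression; the only mildly delicate book-keeping is that the $|t|^{\ell/A}$- and $|t|^{J/A}$-type growth of the high derivatives of $F$ is dominated by the saved factors $1/\log x$ precisely because $T$ is only a power of $\log x$ and $2\ell\le J<A$ with $\ell<A/2$. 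Collecting everything proves \eqref{asymp-weak2}; then \eqref{asymp-weak} follows since $\int_2^x(\log y)^{\alpha-j-1}\,\dee y/\Gamma(\alpha-j)=x(\log x)^{\alpha-j-1}/\Gamma(\alpha-j)+O\bigl((\log x)^{\Re\alpha-j-2}\bigr)$ and the resulting discrepancies lie within the error term.

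The heart of the matter is the error analysis just outlined. The key realizations are that one must \emph{not} estimate the $\tilde G_\ell$- and $\tilde E_\ell$-boundary terms in isolation but let them cancel, and that $T$ has to be chosen large enough for the Perron error yet small enough — a fixed power of $\log x$ — for the $|t|$-growth of the derivatives of $F$ supplied by Lemma~\ref{L-bound}(d) to be harmless; threading this, and deciding which part of Lemma~\ref{L-bound} to apply on which stretch of the contour, is the real work.
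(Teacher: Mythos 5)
Your proposal is correct, and it reaches the same key points as the paper's argument — the decomposition $F/s=\tilde G_\ell+\tilde E_\ell$, Lemma~\ref{L-bound}(c) near $s=1$ (which, as you correctly note, needs $2\ell\le J$, and this holds because $2\ell$ is an integer $<A$ while $J$ is the largest such integer), Lemma~\ref{L-bound}(d) for the taller part of the contour, Hankel's formula for the main term, and the choice $T\asymp(\log x)^\ell$. The implementation is a genuine variant, though. The paper first multiplies by $(\log n)^{r+2\ell}$ and smooths the cut-off with a $C^\infty$ weight $\psi$: Perron then produces $F^{(r+2\ell)}(s)\Psi(s)x^s$, the rapid decay of the Mellin transform $\Psi$ truncates the line integral with \emph{no} boundary terms, and the factor $(\log n)^{r+2\ell}$ is removed at the very end by partial summation. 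You instead use a sharp truncation at height $T$ and integrate by parts $m$ times in $s$ via $x^s\,\dee s=\dee(x^s/\log x)$ — the Mellin-dual operation to multiplying by $(\log n)^m$ — and this does create boundary terms at $s=c\pm iT$. Your key observation that the $\tilde G_\ell$- and $\tilde E_\ell$-boundary terms must be recombined into $\bigl[(F/s)^{(i)}(s)x^s\bigr]_{c-iT}^{c+iT}$ before invoking Lemma~\ref{L-bound}(d) (plus Leibniz on $F/s$) is the correct and necessary fix: separately they can be far larger than the target error. The resulting estimates check out — at $|t|=T=(\log x)^\ell$ one has $(F/s)^{(i)}\ll(\log x)^{\max(i-J,0)+k}T^{\min(i,J)/A-1}$, all of whose contributions are $\ll x(\log x)^{k-1-\ell}$; the $\tilde E_\ell^{(m)}$-integral on $|s-1|\le2$ is $\ll x(\log x)^{k-\ell}$ by L-bound(c) with $m-\ell$ in place of $m$; and on $2<|t|\le T$ the split $\tilde E_\ell^{(m)}=(F/s)^{(m)}-\tilde G_\ell^{(m)}$ gives $\ll x(\log x)^{k-J+\ell J/A}\log\log x$, which is $\ll x(\log x)^{k-\ell}$ precisely because $\ell\le J/2<JA/(A+J)$. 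The smooth-cutoff route buys freedom from boundary-term book-keeping and from the half-integer hypothesis on $x$ (at the price of a final partial-summation step); your sharp-cutoff-plus-IBP route stays closer to the classical Perron mechanism and avoids the auxiliary weight, at the price of having to notice the boundary-term cancellation. Both are sound.
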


\begin{proof} As we discussed in Section \ref{aux}, we may assume that $f=\tau_f$. We may also assume that $A>2$, so that $\ell\ge1$; otherwise, the theorem is trivially true. 

We fix $T\in[\sqrt{\log x},e^{\sqrt{\log x}}]$ to be chosen later as an appropriate power of $\log x$, and we let $\psi$ be a smooth function supported on $[0,1+1/T]$ with 
\[
 \begin{cases}
\psi(y)=1&\text{if}\ y\le 1,\\
\psi(y)\in[0,1] &\text{if}\ 1<y\le 1+1/T,\\
\psi(y)=0 &\text{if}\ y>1+1/T ,
\end{cases}
\]
and whose derivatives satisfy for each fixed $j$ the growth condition $\psi^{(j)}(y) \ll_j T^j$ uniformly for $y\ge0$. For its Mellin transform, we have the estimate
\eq{Psi-e1}{
\Psi(s) = \int_0^\infty \psi(y) y^{s-1} \dee y
	= \frac{1}{s} + \int_1^{1+1/T} \psi(y)y^{s-1}\dee y = \frac{1}{s}+O\left(\frac{1}{T}\right) \quad(1\le\sigma\le2) .
}
This estimate is useful for small values of $t$. We also show another estimate to treat larger values of $t$. Integrating by parts, we find that 
\[
\Psi(s) = - \frac{1}{s}\int_0^\infty \psi'(y)y^s\dee y 
	= - \frac{1}{s}\int_1^{1+1/T} \psi'(y)y^s\dee y \quad(1\le\sigma\le2) .
\]
Iterating and using the bound $\psi^{(j)}(y) \ll_j T^j$, we find that
\[
\Psi(s)  = \frac{(-1)^j}{s(s+1)\cdots(s+j-1)}\int_1^{1+1/T} \psi^{(j)}(y)y^{s+j-1}\dee y 
	\ll_j \frac{T^{j-1}}{|t|^j} \quad(1\le\sigma\le2)  .
\]
We thus conclude that
\eq{Psi-e2}{
\Psi(s) \ll_j \frac{1}{|t|\cdot(1+|t|/T)^{j-1}} \quad(1\le\sigma\le2,\ j\ge1) .
}

Now, let $r$ denote an auxiliary large integer. Then
\als{
\sum_{n\le x} f(n)(\log n)^{r+2\ell}
	&= \sum_{n=1}^\infty f(n)(\log n)^{r+2\ell}\psi(n/x)  
	+ O\left( \sum_{x<n\le x+x/T}|f(n)|(\log n)^{r+2\ell}\right)  \\
	&= \frac{(-1)^r}{2\pi i} \int_{\sigma=1+1/\log x} F^{(r+2\ell)}(s) \Psi(s) x^s \dee s 
		  +O\left(\frac{x(\log x)^{r+2\ell+k-1}}{T}\right) 
} 
since $|f(n)|\le \tau_k(n)$. Fix $\eps>0$. When $|t|\ge (\log x)^\eps T$, we use the bound $\Psi(1+1/\log x+it) =O(T^{j-1}/|t|^j)$ with $j\ge(r+2\ell+k)/\eps+1$. Since we also have that $F^{(r+2\ell)}(1+1/\log x+it)=O((\log x)^{k+r+2\ell})$, we find that 
\[
\sum_{n\le x} f(n)(\log n)^{r+2\ell} 
	= \frac{(-1)^{r}}{2\pi i} \int_{\substack{\sigma=1+1/\log x \\ |t|\le (\log x)^\eps T }} 
		F^{(r+2\ell)}(s) \Psi(s) x^s \dee s 
		    +O\left(x+\frac{x(\log x)^{r+2\ell+k-1}}{T}\right)  .
\]

For $s=1+1/\log x+it$ with $1\le|t|\le (\log x)^\eps T$, we use the bounds $\Psi(s)\ll 1/|t|$ and $F^{(r+2\ell)}(s)\ll |t|^{2\ell/A}(\log x)^{k+r}$, with the second one following from Lemma \ref{L-bound}(d) with $m=r$ and $2\ell$ in place of $\ell$. Thus
\als{
\sum_{n\le x} f(n)(\log n)^{r+2\ell} 
	&= \frac{(-1)^{r}}{2\pi i} \int_{\substack{\sigma=1+1/\log x \\ |t|\le1}}
		F^{(r+2\ell)}(s) \Psi(s) x^s \dee s  \\
	&\qquad +O\left( \frac{x(\log x)^{k+r+2\ell-1}}{T} + x(\log x)^{k+r}\cdot ((\log x)^\eps T)^{2\ell/A} \right) .
}
Since we have assumed that $T\ge \sqrt{\log x}$ and $2\ell<A$, we have that $((\log x)^\eps T)^{2\ell/A}\le T$ for $\eps$ small enough, so that
\als{
\sum_{n\le x} f(n)(\log n)^{r+2\ell} 
	&= \frac{(-1)^{r}}{2\pi i} \int_{\substack{\sigma=1+1/\log x \\ |t|\le1}}
		F^{(r+2\ell)}(s) \Psi(s) x^s \dee s  \\
	&\qquad +O\left( \frac{x(\log x)^{k+r+2\ell-1}}{T} + x(\log x)^{k+r}T \right) .
}

In the remaining part of the integral, we use the formula $\Psi(s)=1/s+O(1/T)$ and the bound $F^{(r+2\ell)}(s)\ll(\log x)^{r+2\ell+k}$ to find that
\als{
\sum_{n\le x} f(n)(\log n)^{r+2\ell} 
	&= \frac{(-1)^{r}}{2\pi i} \int_{\substack{\sigma=1+1/\log x \\ |t|\le1}}
		F^{(r+2\ell)}(s) \frac{x^s}{s} \dee s  \\
	&\qquad +O\left( \frac{x(\log x)^{k+r+2\ell}}{T} + x(\log x)^{k+r}T \right) .
}
We then choose $T=(\log x)^\ell$ and use Lemma \ref{L-bound}(c) with $m=r+\ell$ to write $F^{(r+2\ell)}(s) = G_\ell^{(r+2\ell)}(s)+O((\log x)^{r+\ell+k})$. Hence
\als{
\sum_{n\le x} f(n)(\log n)^{r+2\ell} 
	&= \frac{(-1)^{r}}{2\pi i} \int_{\substack{\sigma=1+1/\log x \\ |t|\le1}}
		G_\ell^{(r+2\ell)}(s) \frac{x^s}{s} \dee s \\
	&\qquad +O\left( \frac{x(\log x)^{k+r+2\ell}}{T} + x(\log x)^{k+r}T \right) \\
	&= \frac{(-1)^{r}}{2\pi i} \int_{\substack{\sigma=1+1/\log x \\ |t|\le1}}
		G_\ell^{(r+2\ell)}(s) \frac{x^s}{s} \dee s +O(x(\log x)^{r+k+\ell})  .
}
 
Note that $G_\ell^{(r+2\ell)}(s) \ll  |s-1|^{-\Re(\alpha)-2\ell-r}+|s-1|^{-\Re(\alpha)-\ell-r-1}$. Thus, if $r\ge|\alpha|+1$, then both exponents of $|s-1|$ are $\le-2$. In particular, $G_\ell^{(r+2\ell)}(s) \ll |t|^{-2}$ when $|t|\ge1$ and $G_\ell^{(r+2\ell)}\ll (\sigma-1)^{-2r-2\ell}$ otherwise, so that
\als{
\sum_{n\le x} f(n)(\log n)^{r+2\ell} 
  	&=  \frac{(-1)^{r}}{2\pi i} \int_{\sigma=1+1/\log x} 
		G_\ell^{(r+2\ell)}(s) \frac{x^s}{s} \dee s +O(x(\log x)^{r+k+\ell}) \\
	&=  \frac{(-1)^{r}}{2\pi i} \int_{\sigma=1+1/\log x} 
		G_\ell^{(r+2\ell)}(s) \frac{x^s-1}{s} \dee s +O(x(\log x)^{r+k+\ell}) .
}

Since $(x^s-1)/s = \int_1^x y^{s-1}\dee y$ and 
\[
(-1)^r G_\ell^{(r+2\ell)}(s) = \sum_{j=0}^{\ell-1} \frac{\Gamma(\alpha-j+r+2\ell)}{\Gamma(\alpha-j)} c_j (s-1)^{-\alpha-r-2\ell+j} , 
\]
we find that 
\als{
&\frac{(-1)^{r}}{2\pi i} \int_{\substack{\sigma=1+1/\log x }}
		G_\ell^{(r+2\ell)}(s) \frac{x^s-1}{s} \dee s\\
	&\qquad= \sum_{j=0}^{\ell-1} \frac{\Gamma(\alpha-j+r+2\ell)}{\Gamma(\alpha-j)} 
	\cdot \frac{c_j}{2\pi i} \int_1^x 	
		\int_{\substack{\sigma=1+1/\log x }} (s-1)^{-\alpha-r-2\ell+j} y^{s-1} \dee s\, \dee y  \\
	&\qquad=\sum_{j=0}^{\ell-1} \frac{c_j}{\Gamma(\alpha-j)}  \int_1^x  (\log y)^{\alpha+r+2\ell-j-1}  \dee y 
}
by Lemma \ref{hankel}, whence
\als{
\sum_{n\le x} f(n)(\log n)^{r+2\ell}
	&= \int_1^x  \sum_{j=0}^{\ell-1} \frac{c_j}{\Gamma(\alpha-j)} (\log y)^{\alpha+r+2\ell-j-1}  \dee y 
		+O(x(\log x)^{r+\ell+k}) .
}
Partial summation the completes the proof of \eqref{asymp-weak}.

To deduce \eqref{asymp-weak2}, we integrate by parts in \eqref{asymp-weak}. Alternatively, we may use a modification of the argument leading to \eqref{asymp-weak}, starting with the formula
\als{
\sum_{n\le x} f(n)\psi(n/x)
	&= \frac{1}{2\pi i} \int_{\sigma=1+1/\log x} F(s) \Psi(s) x^s \dee s \\
	&= \frac{(-1/\log x)^{r+2\ell}}{2\pi i} \int_{\sigma=1+1/\log x} (F\Psi)^{(r+2\ell)}(s) x^s \dee s ,
}
that is obtained by integrating by parts $r+2\ell$ times. We then bound the above integral as before: in the portion with $|t|\ge1$, we estimate $F$ and its derivatives by Lemma \ref{L-bound}(d), and we use the bound $\Psi^{(j)}(s) \ll_j |t|^{-1}/(1+|t|/T)^{j-1}$; in the portion with $|t|\le1$, we use the bound $\frac{\dee^j}{\dee s^j}(\Psi(s)-1/s) \ll 1/T^{j+1}$ and we approximate $(F(s)/s)^{(r+2\ell)}$ by $\tilde{G}_\ell^{(r+2\ell)}(s)$ using Lemma \ref{L-bound}(c). 
\end{proof}

Evidently, Theorem \ref{SD-weak} is weaker than Theorem \ref{SD}. On the other hand, if $f=\tau_\alpha$, then \eqref{f-alpha} holds for arbitrarily large $A$, so that we can take $\ell$ to be arbitrarily large in \eqref{asymp-weak} and \eqref{asymp-weak2}. For general $f$, we may write $f=\tau_\alpha*f_0$. The partial sums of $\tau_\alpha$ can be estimated to arbitrary precision using \eqref{asymp-weak} with $\ell$ as large as we want. On the other hand, $f_0$ satisfies \eqref{f-alpha} with $\alpha=0$. So if we knew Theorem \ref{SD} in the special case when $\alpha=0$, we would deduce it in the case of $\alpha\neq0$ too (with a slightly weaker error term, as we will see). The next section fills in the missing step.

\section{The case $\alpha=0$ of Theorem \ref{SD}}\label{sec0}

\begin{thm}\label{SD-0}
Let $f$ be a multiplicative function with $|f|\le\tau_k$ and 
\eq{alpha=0}{
\sum_{p\le x}f(p)\log p \ll \frac{x}{(\log x)^A} 
}
for some $A>0$. Then
\[
\sum_{n\le x} f(n) \ll x(\log x)^{k-1-A} .
\]
The implied constant depend at most on $k$, $A$ and the implicit constant in \eqref{alpha=0}.
\end{thm}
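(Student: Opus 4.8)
The plan is to prove the statement by a Wirsing-type strong induction on $x$, taking as inductive hypothesis the desired bound itself: $\bigl|\sum_{n\le y}f(n)\bigr|\le C\,y(\log 2y)^{k-1-A}$ for all $2\le y<x$, where $C$ depends only on $k$, $A$ and the implied constant in \eqref{alpha=0}. First I would reduce, exactly as in Section \ref{aux}, to the case $f=\tau_f$, so that $\Lambda_f$ is supported on prime powers with $\Lambda_f(p^\nu)=f(p)\log p$ and $|\Lambda_f|\le k\Lambda$. Partial summation applied to \eqref{alpha=0}, after crudely discarding the higher prime powers (whose total contribution is $\ll\sqrt y$), then gives the basic estimate $L_0(y):=\sum_{n\le y}\Lambda_f(n)\ll y(\log 2y)^{-A}$. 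The engine of the proof is the elementary identity
\[
M(x)\log x=\int_1^x\frac{M(t)}{t}\,\dee t+\sum_{b\le x}\Lambda_f(b)M(x/b),\qquad M(x):=\sum_{n\le x}f(n),
\]
which follows by evaluating $\sum_{n\le x}f(n)\log n$ in two ways: as $M(x)\log x-\int_1^x M(t)/t\,\dee t$, and, via $f\log=f*\Lambda_f$, as $\sum_{b\le x}\Lambda_f(b)M(x/b)$.

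I would then split $\sum_{b\le x}\Lambda_f(b)M(x/b)$ at the two thresholds $x^\eta$ and $\sqrt x$, where $\eta=\eta(k,A)>0$ is a small parameter to be chosen at the end. For $b>\sqrt x$, swapping the order of summation writes the contribution as $\sum_{a<\sqrt x}f(a)\bigl(L_0(x/a)-L_0(\sqrt x)\bigr)$, which the bounds $|L_0(y)|\ll y(\log 2y)^{-A}$ and $|f|\le\tau_k$, together with $\log(2x/a)\gg\log x$, show is $\ll x(\log 2x)^{k-A}$, up to the negligible term $L_0(\sqrt x)M(\sqrt x)\ll C\,x(\log 2x)^{k-1-2A}$. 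For $x^\eta<b\le\sqrt x$ I would use summation by parts in $b$ against $\dee L_0$: the boundary terms are $\ll_\eta C\,x(\log 2x)^{k-1-2A}$, and the remaining integral equals $\sum_{\sqrt x<n<x^{1-\eta}}f(n)L_0(x/n)$, which is $\ll_\eta x(\log 2x)^{k-A}$ since $\log(2x/n)\gg_\eta\log x$ in this range and $\sum_{n\le x}\tau_k(n)/n\ll(\log 2x)^k$. Finally, for $b\le x^\eta$ I would only use $|\Lambda_f(b)|\le k\log b$ and the inductive hypothesis $|M(x/b)|\le C(x/b)(\log(2x/b))^{k-1-A}\ll_\eta C(x/b)(\log 2x)^{k-1-A}$, which gives a contribution $\ll_\eta k\,C(\log 2x)^{k-1-A}x\sum_{b\le x^\eta}(\log b)/b\ll_\eta k\eta\,C\,x(\log 2x)^{k-A}$. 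The integral term $\int_1^x M(t)/t\,\dee t$ is negligible: by the inductive hypothesis it is $\ll C\bigl((\log 2x)^{\max(k-A,0)}+\log\log 2x\bigr)$, which carries no factor of $x$.

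Putting this together and dividing by $\log x$, one obtains, for $x\ge x_0=x_0(k,A,\eta,\text{implied constant in \eqref{alpha=0}})$, an inequality of the form $|M(x)|\le C_*\,x(\log 2x)^{k-1-A}+\bigl(\kappa\,k\eta+o(1)\bigr)C\,x(\log 2x)^{k-1-A}$, with $C_*$ and $\kappa$ depending only on $k$, $A$, $\eta$ and the implied constant in \eqref{alpha=0}. The delicate point — and the one I expect to be the main obstacle — is the short range $b\le x^\eta$: it is genuinely self-referential, since it requires a bound on $M$ at arguments as large as $x^{1-\eta}$, and the middle range $x^\eta<b\le\sqrt x$ superficially looks circular too. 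The circularity is broken by two observations: (i) in the middle range, summation by parts transfers the burden from $\Lambda_f$ (where we have cancellation) onto $f$ (where we only have $|f|\le\tau_k$), at the price of a harmless factor $\eta^{-A}$; and (ii) the self-referential coefficient from the short range is only $\asymp k\eta$, because $\sum_{b\le x^\eta}\Lambda_f(b)/b\asymp k\eta\log x$ and this $\log x$ cancels the $\log x$ on the left of the basic identity. I would therefore fix $\eta$ so small that $\kappa\,k\eta<\tfrac12$, then choose $C$ large enough that $C_*\le C/2$ and that the inductive hypothesis holds for all $2\le y\le x_0$ (possible since $|M(y)|\ll y(\log 2y)^{k-1}$ trivially), and finally close the strong induction: for $x>x_0$ the displayed inequality — in which every occurrence of $M$ on the right involves an argument $<x$ — yields $|M(x)|\le C\,x(\log 2x)^{k-1-A}$, which is the claimed bound with the stated dependence of the implied constant.
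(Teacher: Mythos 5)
Your proposal is correct and is essentially the paper's own proof of Theorem \ref{SD-0}: after reducing to $f=\tau_f$, you run the same Wirsing-style induction on the identity $f\log=f*\Lambda_f$, using the cancellation in $L_0(y)=\sum_{n\le y}\Lambda_f(n)\ll y(\log 2y)^{-A}$ on the range where the $\Lambda_f$-variable exceeds $x^{\eta}$ and the induction hypothesis on the short range $b\le x^{\eta}$, then removing a factor of $\log x$ and closing by choosing $\eta$ small before taking $C$ large; your extra split at $\sqrt{x}$ and the summation by parts in the middle range are cosmetic variants of the paper's single swap of the order of summation. Two small slips to fix in the write-up, neither of which affects the argument: the short-range coefficient comes from $\sum_{b\le x^{\eta}}|\Lambda_f(b)|/b\le k\sum_{b\le x^{\eta}}\Lambda(b)/b\ll k\eta\log x$ (not from $\sum_{b\le x^{\eta}}(\log b)/b$, which is of size $(\eta\log x)^2$ --- your summary's point (ii) already states the correct bound), and $\int_1^x M(t)\,\dee t/t$ is in fact $\ll C\,x(\log 2x)^{k-1-A}$ rather than your stated bound, which is still harmless because after dividing by $\log x$ it is absorbed into the $o(1)\cdot C$ term of your final inequality.
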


\begin{proof}  As we discussed in Section \ref{aux}, we may assume that $f=\tau_f$. Our goal is to show the existence of an absolute constant $M$ such that
\eq{ind-hyp}{
\abs{\sum_{n\le x} f(n)} \le Mx (\log x)^{k-1-A} \quad(x\ge2) .
}
We argue by induction on the dyadic interval on which $x$ lies: if $x\le 2^{j_0}$, where $j_0$ is a large integer to be selected later, then \eqref{ind-hyp} holds by taking $M$ large enough in terms of $j_0$ (and $k$). Assume now that \eqref{ind-hyp} holds for all $x\le 2^j$ with $j\ge j_0$, and consider $x\in[2^{j/2},2^{j+1}]$. If $\eps=2/j_0$, then
\al{
\sum_{n\le x}f(n)\log n 
	&= \sum_{ab\le x}\Lambda_f(a)f(b)  \nn
	&= \sum_{2\le a\le x^\eps} \Lambda_f(a)\sum_{b\le x/a} f(b)
		+ \sum_{b\le x^{1-\eps}} f(b) \sum_{x^\eps<a\le x/b} \Lambda_f(a) \label{ind-e1},
}
where the restriction $a\ge2$ is automatic by the fact that $\Lambda_f$ is supported on prime powers. We may thus estimate the first sum in \eqref{ind-e1} by the induction hypothesis, and the second sum by \eqref{alpha=0}. Hence
\[
\sum_{n\le x}f(n)\log n 
	\ll \sum_{a\le x^\eps} |\Lambda_f(a)| \frac{Mx}{a}\cdot (\log(x/a))^{k-1-A} 
		+ \sum_{b\le x^{1-\eps}} |f(b)| \cdot \frac{x}{b(\log(x/b))^A} .
\]
The implied constant here and below depends on $k$, $A$ and the implied constant in \eqref{alpha=0}, but not on our choice of $M$. Since $|\Lambda_f|\le k\Lambda$ (and thus $|f|\le\tau_k$), as well as $(\log(x/b))^{-A}\le(\eps\log x)^{-A}$ for $b\le x^{1-\eps}$, we deduce that
\als{
\sum_{n\le x}f(n)\log n 
	&\ll Mx(\log x)^{k-1-A} \sum_{a\le x^\eps} \frac{\Lambda(a)}{a} 
		+ \frac{x}{(\eps\log x)^{A}} \sum_{b\le x^{1-\eps}} \frac{\tau_k(b)}{b} \\
	&\ll (\eps M+\eps^{-A}) x(\log x)^{k-A} 
}
uniformly for $x\in[2^{j/2},2^{j+1}]$. By partial summation, we thus conclude that 
\[
\sum_{n\le x}f(n)  = O(\sqrt{x}(\log x)^{k-1}) + \int_{\sqrt{x}}^x \frac{1}{\log y} \dee \sum_{n\le y}f(n)\log n 
	 \ll (\eps M+\eps^{-A}) x(\log x)^{k-1-A} 
\]
for $x\in(2^j,2^{j+1}]$. To complete the inductive step, we take $j_0=2/\eps$ to be large enough so as to make the $\ll \eps M$ part of the upper bound $\leq M/2$, and then $M$ to be large enough in terms of $j_0$ so that the $\ll \eps^{-A}$ part of the upper bound is also $\le M/2$. The theorem is thus proven.
\end{proof}

By Theorem \ref{SD-0} and the discussion in the last paragraph of Section \ref{perron}, we obtain Theorem \ref{SD} with the error term being $O(x(\log x)^{k+2|\alpha|-A-1}\log\log x)$. The reason for this weaker error term is that for the function $f_0=f*\tau_{-\alpha}$ we only know that $|\Lambda_{f_0}|\le (k+|\alpha|)\Lambda$. To deduce Theorem \ref{SD} in the stated form, we will modify the proof of Theorem \ref{SD-0} to handle functions $f$ satisfying \eqref{f-alpha} for general $\alpha$. This is accomplished in the next section. 

\section{Proof of Theorem \ref{SD}}\label{proof}

We introduce the auxiliary functions
\[
g(y) := {\bf 1}_{y>1} \cdot \sum_{j=0}^J \frac{c_j}{\Gamma(\alpha-j)} (\log y)^{\alpha-1-j} 
\quad\text{and}\quad
d(n) = f(n)-g(n) .
\]
Our goal is to show that 
\eq{SD-goal}{
\sum_{n\le x} d(n) \ll x(\log x)^{k-1-A} (\log\log x)^{{\bf1}_{J=A+1}} .
}
Theorem \ref{SD} then readily follows, since partial summation implies that 
\[
\sum_{n\le x}g(n) = \sum_{j=0}^J \frac{c_j}{\Gamma(\alpha-j)}\int_2^x (\log y)^{\alpha-1-j} \dee y 
	+ O(1+(\log x)^{\text{Re}(\alpha)-1}) .
\]

We start by showing a weak version of \eqref{SD-goal} for smoothened averages of $d$:

%%%%%%%%%%%%%%%%%%%%%%%%%%%%%%%%%%%%%%%%%%%%%%%%%%%%%%%%%%%%%%%%%%%%%%%%%%%%%%%%%%%%%%%%%%%%%%%%%%%%%%%%%%%%%%%%%%%%%%%%%%%%%%%%%%%%%%%%%%%%%%%%%%%%%%%%%%%%%%%%%%%%%%%%%%%%%%%%%%%%%%%%%%%%%%%%%%%%%%%%%%%%%%%%%%%%

\begin{lem}\label{SD-lem2}
Let $f$ be a multiplicative function such that $f=\tau_f$ and for which \eqref{f-alpha} holds. Let $\psi:\R\to\R$ be a function in the class $C^\infty(\R)$ supported in $[\gamma,\delta]$ with $0<\gamma<\delta<\infty$. There are integers $J_1$ and $J_2$ depending at most on $A$ and $k$ such that
\[
\sum_{n=1}^\infty \frac{d(n)}{n}\, \psi\left(\frac{\log n}{\log x}\right) \ll 
	(1+\gamma^{-1})^{J_1} e^\delta
		\max_{j\le J_2} \|\psi^{(j)}\|_\infty \cdot  (\log x)^{\Re(\alpha)-A} (\log\log x)^{{\bf1}_{A=J+1}},
\]
for $x\ge2$, with the implied constant depending on $A$, $k$ and the implicit constant in \eqref{f-alpha}, but not on $\psi$.
\end{lem}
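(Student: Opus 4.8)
The plan is to use Mellin inversion to write the smoothened sum as a contour integral and then exploit the cancellation built into $d = f - g$. First I would introduce the Mellin transform $\Phi(s) = \int_0^\infty \psi(u) e^{su}\,\dee u$ of the function $u\mapsto\psi(u)$, so that $\psi(\log n/\log x) = \frac{1}{2\pi i}\int_{(\sigma_0)} \Phi((s-1)\log x)\, n^{-(s-1)}\,(\log x)\,\dee s$ after a change of variables; integrating by parts $J_2$ times shows $\Phi(w) \ll_{J_2} \max_{j\le J_2}\|\psi^{(j)}\|_\infty \cdot e^{\delta|\Re(w)|}\,(1+|w|)^{-J_2}$, which is the source of the factors $e^\delta$ and $\max_j\|\psi^{(j)}\|_\infty$ in the bound, and the restriction of $\psi$ to $[\gamma,\delta]$ is what controls the behaviour near $s=1$. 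Then $\sum_n d(n)n^{-1}\psi(\log n/\log x)$ becomes $(\log x)\cdot\frac{1}{2\pi i}\int_{(\sigma_0)} D(s)\,\Phi((s-1)\log x)\,\dee s$, where $D(s) = F(s) - \sum_{j=0}^J \frac{c_j}{\Gamma(\alpha-j)}\mathcal{M}_j(s)$ and $\mathcal{M}_j(s)$ is the Dirichlet series (really the Mellin-type transform) of $g$ restricted to $(1,\infty)$; by Lemma \ref{hankel}, $\mathcal{M}_j(s)$ is essentially $(s-1)^{-(\alpha-j)}$ up to lower-order analytic pieces, so $D(s)$ is, up to negligible analytic terms, precisely $F(s) - \sum_{j=0}^J c_j(s-1)^{j-\alpha} = E_{J+1}(s)$ in the notation of \eqref{E}.

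Next I would move the contour to $\sigma = 1 + 1/\log x$ (no singularity is crossed since $D$ is holomorphic for $\Re(s)>1$) and split the integral at $|t| = (\sigma-1)^{1-A/(J+1)}/(-\log(\sigma-1))$, the threshold appearing in Lemma \ref{L-bound}(b). For $|t|$ below the threshold, I use Lemma \ref{L-bound}(b) to get $D(s) = E_{J+1}(s) + O(\cdots) \ll |s-1|^{J+1-\Re(\alpha)}(\sigma-1)^{-(J+1-A)}(-\log(\sigma-1))^{{\bf1}_{A=J+1}}$, which with $|s-1|\asymp \sigma-1$ on most of this range and the rapid decay of $\Phi$ gives a contribution of the right size $(\log x)^{\Re(\alpha)-A}(\log\log x)^{{\bf1}_{A=J+1}}$ after accounting for the extra $\log x$ out front; the factor $(1+\gamma^{-1})^{J_1}$ enters here because on the part of the range where $|t|\gg \sigma-1$ one trades powers of $|t|$ for powers of $\log x$ using $\Phi$'s decay, and the number of derivatives/size of the exponents needed is governed by $\gamma$ (the left endpoint of the support forces $\Phi$ to be small only at a rate depending on $1/\gamma$). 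For $|t|$ above the threshold, $E_{J+1}$ is no longer controlled, so instead I bound $F(s)$ and the explicit pieces $\mathcal{M}_j(s)$ separately: Lemma \ref{L-bound}(d) gives $F(s) \ll |t|^{0}(\sigma-1)^{-k}$ in the crude form, or more usefully $F^{(m)}$ bounds, while $\mathcal{M}_j(s) \ll (\sigma-1)^{j-\Re(\alpha)}$ or $|t|^{j-\Re(\alpha)}$; combined with $\Phi((s-1)\log x) \ll \max_j\|\psi^{(j)}\|_\infty e^{\delta\cdot 1}(1+|t|\log x)^{-J_2}$ and choosing $J_2$ large enough (here is where $J_2$ is fixed in terms of $A,k$), the tail is negligible, of size $O((\log x)^{-100})$ say.

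The main obstacle I expect is the bookkeeping in the intermediate range $\sigma-1 \ll |t| \ll (\sigma-1)^{1-A/(J+1)}/(-\log(\sigma-1))$: there $|s-1|\asymp|t|$ rather than $\asymp\sigma-1$, so the bound from Lemma \ref{L-bound}(b) reads $\ll |t|^{J+1-\Re(\alpha)}(\sigma-1)^{-(J+1-A)}(-\log(\sigma-1))^{{\bf1}_{A=J+1}}$, and one must integrate this against $|\Phi((s-1)\log x)| \ll \max_j\|\psi^{(j)}\|_\infty e^\delta (1+|t|\log x)^{-J_1}$ and check that the $t$-integral converges and produces no worse than $(\log x)^{\Re(\alpha)-A-1}(\log\log x)^{{\bf1}_{A=J+1}}$, so that after multiplying by the outer $\log x$ we land exactly on the claimed bound. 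This forces $J_1 > J+1-\Re(\alpha)+1$, i.e. $J_1$ is fixed in terms of $A$ and $k$; getting the power of $(1+\gamma^{-1})$ right (rather than some worse dependence) requires being slightly careful that the only place $\gamma$ intervenes is through the constant in the decay estimate for $\Phi$, since $\psi$ vanishing on $[0,\gamma]$ means $\Phi(w)$ for $\Re(w)<0$ decays like $e^{\gamma\Re(w)}$ — giving room to absorb the growth of $F$ on the left — but we never actually move left of $\Re(s)=1$, so in fact only the $|t|$-decay matters and $\gamma$ enters merely in controlling $\|\psi^{(j)}\|$-type quantities through rescaling; I would keep $J_1$ and $J_2$ abstract and only record that they depend on $A$ and $k$.
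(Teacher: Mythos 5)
Your overall strategy (Mellin transform of the smooth weight, contour on $\Re(s)=1+1/\log x$, trivial bounds plus rapid decay of the transform for large $|t|$, Lemma \ref{L-bound}(b) below a threshold) is the same skeleton as the paper's proof, but there is a genuine gap at the first step: the claim that $D(s)$ equals $E_{J+1}(s)$ ``up to negligible analytic pieces.'' If your $\mathcal{M}_j(s)$ is the Dirichlet series $\sum_{n\ge 2}(\log n)^{\alpha-1-j}n^{-s}$, then its difference from $\Gamma(\alpha-j)(s-1)^{j-\alpha}$ near $s=1$ is of size $\asymp 1$, not small; indeed the identity \eqref{gamma} requires $\Re(\alpha-j)>0$, and for $j>\Re(\alpha)$ the Mellin integral $\int_1^\infty(\log y)^{\alpha-1-j}y^{-s}\,\dee y$ even diverges at $y=1$, so the ``Mellin-type transform'' reading of $\mathcal{M}_j$ is not available either (and a truncation near $y=1$ destroys the clean $(s-1)^{j-\alpha}$ shape). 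An additive $O(1)$ error in $D(s)$, integrated against your $\Phi((s-1)\log x)$ (whose total mass along the line is $\asymp 1/\log x$) and multiplied by the outer factor $\log x$, contributes $O(1)$ to the smoothed sum — but the bound to be proved is $(\log x)^{\Re(\alpha)-A}$, typically a \emph{negative} power of $\log x$. So the discrepancy you dismiss as negligible swamps the target, and the small-$|t|$ range has nothing valid to which Lemma \ref{L-bound}(b) can be applied.

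The missing idea, which is exactly how the paper proceeds, is to first insert the weight $(\log n)^m$ with $m=J+k+1$, writing $\psi(u)=u^m\phi(u)$, $\phi(u)=\psi(u)/u^m$; this is where the hypothesis $\gamma>0$ and the factor $(1+\gamma^{-1})^{J_1}$ really come from (through $\|\phi^{(j)}\|_\infty\ll(1+\gamma^{-1})^{j+m}\max_{\ell\le j}\|\psi^{(\ell)}\|_\infty$), not from decay rates of the transform as you suggest. This weighting has two simultaneous effects: (i) the integrand becomes $D^{(m)}(s+1)\hat{\phi}_x(s)$, and since every exponent $m+\alpha-1-j$ now has real part $\ge k+\Re(\alpha)\ge 0$, relation \eqref{gamma} and partial summation give $G^{(m)}(s+1)=(-1)^J G_{J+1}^{(m)}(s+1)+O(1)$, hence $D^{(m)}=E_{J+1}^{(m)}+O(1)$, which is a legitimate input for Lemma \ref{L-bound}(b); and (ii) the target bound is raised to $(\log x)^{m+\Re(\alpha)-A}$, a nonnegative power of $\log x$, so the $O(1)$ errors are now harmless. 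With this modification (and splitting at $|t|\le(\log x)^{A/(J+2)-1}$, slightly inside the range allowed by Lemma \ref{L-bound}(b)), the rest of your outline goes through essentially as in the paper.
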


%%%%%%%%%%%%%%%%%%%%%%%%%%%%%%%%%%%%%%%%%%%%%%%%%%%%%%%%%%%%%%%%%%%%%%%%%%%%%%%%%%%%%%%%%%%%%%%%%%%%%%%%%%%%%%%%%%%%%%%%%%%%%%%%%%%%%%%%%%%%%%%%%%%%%%%%%%%%%%%%%%%%%%%%%%%%%%%%%%%%%%%%%%%%%%%%%%%%%%%%%%%%%%%%%%%%

\begin{proof} All implied constants might depend on $A$, $k$ and the implicit constant in \eqref{f-alpha} without further notice. We will prove the lemma with $J_2=1+k+\fl{(A+2k)(J+2)/A}$ and $J_1=J_2+m$, where $m=J+k+1$. 

Set $\phi(y) = \psi(y)/y^m$ and note that
\[
\|\phi^{(j)}\|_\infty \ll_j (1+\gamma^{-1})^{j+m} \max_{0\le \ell\le j}\|\psi^{(\ell)}\|_\infty .
\]
It thus suffices to prove that
\eq{phi-SD}{
\sum_{n=1}^\infty \frac{d(n)(\log n)^m}{n}\, \phi\left(\frac{\log n}{\log x}\right) \ll  
	M \cdot (\log x)^{m+\Re(\alpha)-A} (\log\log x)^{{\bf1}_{A=J+1}} ,
}
where
\[
M:= e^\delta
		\max_{j\le J_2} \|\phi^{(j)}\|_\infty  .
\]

We consider the Mellin transform of the function $y\to \phi(\log y/\log x)$, that is to say the function
\[
\hat{\phi}_x(s) 
	:= \int_0^\infty  \phi\left(\frac{\log y}{\log x}\right) y^{s-1} \dee y
	= (\log x) \int_\gamma^\delta \phi(u) x^{su}\dee u .
\]
We then have that
\eq{f-g mellin}{
\sum_{n=1}^\infty \frac{d(n)(\log n)^m}{n}\, \phi\left(\frac{\log n}{\log x}\right) 
	= \frac{(-1)^m}{2\pi i} \int_{\sigma=1/\log x} 
			 D^{(m)}(s+1) \hat{\phi}_x(s) \dee s,
}
where $D:=F-G$ with $G(s) := \sum_n g(n)/n^s$.

We first bound $\hat{\phi}_x(s)$. We have the trivial bound 
\[
\hat{\phi}_x(s)  \ll e^\delta \|\phi\|_\infty \log x  \quad\text{when}\quad \Re(s)=1/\log x .
\]
Moreover, if we integrate by parts $j$ times in $\int_\gamma^\delta \phi(u) x^{su}\dee u$, we deduce that
\[
\hat{\phi}_x(s) 
	= \frac{\log x}{(-s\log x)^j} \int_\gamma^\delta \phi^{(j)}(u) x^{su}\dee u 
		\ll \frac{e^\delta \|\phi^{(j)}\|_\infty}{|s|^j(\log x)^{j-1}}  \quad\text{when}\quad \Re(s)=1/\log x\  ;
\]
we used here our assumption that $\text{supp}(\phi)\subset[\gamma,\delta]$, which implies that $\phi^{(j)}(u)=0$ for all $j$ and all $u\notin(\gamma,\delta)$. Putting together the above estimates, we conclude that
\eq{phi-IBP}{
\hat{\phi}_x(1/\log x+it)  \ll M \cdot \frac{\log x}{(1+|t|\log x)^j}  
}
for each $j\in\Z\cap[0,J_2]$, where the implied constant is independent of $\phi$.

Next, we bound $D^{(m)}(s+1)$ on the line $\Re(s)=1/\log x$. Since $d(n)(\log n)^m \ll \tau_k(n)(\log n)^m+(\log n)^{\Re(\alpha)-1+m}$ and $\Re(\alpha)\le k$, we conclude that $D^{(m)}(1+1/\log x+it) \ll  (\log x)^{k+m}$. Together with \eqref{phi-IBP} applied with $j=1+\fl{(A+2k)(J+2)/A}=J_2-k$, this bound implies that the integrand in the right hand side of \eqref{f-g mellin} is $\ll M \cdot (\log x)^{m+k}  \cdot (\log x)/(|t|\log x)^{J_2-k}$. Hence the portion of the integral with $|t|\ge (\log x)^{\frac{A}{J+2}-1}$ in \eqref{f-g mellin} contributes 
\[
\ll  M\cdot (\log x)^{m+k-\frac{A}{J+2}(J_2-k-1)} \le M\cdot (\log x)^{m-k-A}\le M\cdot(\log x)^{m+\Re(\alpha)-A} .
\] 

Finally, we bound the portion of the integral in \eqref{f-g mellin} with $|t|\le (\log x)^{\frac{A}{J+2}-1}$. Note that
\[
G^{(m)}(s+1) =(-1)^m \sum_{j=0}^J\frac{c_j}{\Gamma(\alpha-j)}  \sum_{n=2}^\infty \frac{(\log n)^{m+\alpha-1-j}}{n^{s+1}} .
\]
Since we have assumed that $m=J+k+1\ge J+|\alpha|+1$, and here we have that $|t|\le 1$ and $\sigma=1/\log x$, partial summation implies that
\als{
G^{(m)}(s+1) 
	&= (-1)^m\sum_{j=0}^J\frac{c_j}{\Gamma(\alpha-j)} \int_1^\infty \frac{(\log y)^{m+\alpha-1-j}}{y^{s+1}} \dee y 
	+ O(1) \\
	&= (-1)^{m+J} \sum_{j=0}^J c_j \frac{\Gamma(m+\alpha-j)}{\Gamma(\alpha-j)} s^{j-\alpha-m}
	+ O(1) \\
	&= (-1)^J G_{J+1}^{(m)}(s+1) + O(1) 
}
in the notation of Section \ref{aux}, where we used \eqref{gamma} with $s$ replaced by $s+1$ to obtain the second equality. 
  
We will apply Lemma \ref{L-bound}(b) with $s=1+1/\log x+it$. Notice that we have $|t|\le(\log x)^{\frac{A}{J+2}-1}\ll (\log x)^{\frac{A}{J+1}-1}/\log\log x$, so that the hypotheses of Lemma \ref{L-bound}(b) are met. Consequently, 
\eq{G-bound}{
D^{(m)}(1+1/\log x +it) 
	&=E_{J+1}^{(m)}(1+1/\log x+it) +O(1) \\
	&\ll (1+|t|\log x)^{J+1-\Re(\alpha)} (\log x)^{m-A+\Re(\alpha)}  .
}
Since $J_2\ge J+k+3$, relation \eqref{phi-IBP} with $j=J+k+3$ implies that
\[
\hat{\phi}_x(1/\log x+it) \ll M \cdot \frac{\log x}{(1+|t|\log x)^{J+k+3}} .
\]
We conclude that the portion of the integral with $|t|\le(\log x)^{\frac{A}{J+2}-1}$ in \eqref{f-g mellin} contributes $\ll M\cdot (\log x)^{m+\Re(\alpha)-A}$. This completes the proof of the lemma.
\end{proof}

%%%%%%%%%%%%%%%%%%%%%%%%%%%%%%%%%%%%%%%%%%%%%%%%%%%%%%%%%%%%%%%%%%%%%%%%%%%%%%%%%%%%%%%%%%%%%%%%%%%%%%%%%%%%%%%%%%%%%%%%%%%%%%%%%%%%%%%%%%%%%%%%%%%%%%%%%%%%%%%%%%%%%%%%%%%%%%%%%%%%%%%%%%%%%%%%%%%%%%%%%%%%%%%%%%%%

We have that $f\log=f*\Lambda_f$. Since $\sum_{n=1}^\infty g(n)/n^s$ approximates the analytic behaviour of $F$, we might expect that the function 
\eq{f-g-d}{
g\log -g*\Lambda_f = d*\Lambda_f-d\log
}
is small on average. In reality, its asymptotic behaviour is a bit more complicated:

%%%%%%%%%%%%%%%%%%%%%%%%%%%%%%%%%%%%%%%%%%%%%%%%%%%%%%%%%%%%%%%%%%%%%%%%%%%%%%%%%%%%%%%%%%%%%%%%%%%%%%%%%%%%%%%%%%%%%%%%%%%%%%%%%%%%%%%%%%%%%%%%%%%%%%%%%%%%%%%%%%%%%%%%%%%%%%%%%%%%%%%%%%%%%%%%%%%%%%%%%%%%%%%%%%%%

\begin{lem}\label{SD-lem3}
Let $f$ be a multiplicative function such that $f=\tau_f$ and for which \eqref{f-alpha} holds. There is a constant $\kappa\in\R$ such that 
\[
\sum_{n\le x}((\Lambda_f*g)(n)-g(n)\log n) = 
	\kappa x + O(x(\log x)^{k-A}(\log\log x)^{{\bf1}_{A=J+1}}) .
\]
The implied constant depend at most on $k$, $A$ and the implicit constant in \eqref{f-alpha}. The dependence on $A$ comes from both its size, and its distance from the nearest integer.
\end{lem}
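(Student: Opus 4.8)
The plan is to analyze the Dirichlet series of $h := \Lambda_f * g - g\log$ and locate its singularities near $s=1$. Recall that $g(n)$ is built from powers of $\log n$, so its Dirichlet series is $G(s) = \sum_{j=0}^J \frac{c_j}{\Gamma(\alpha-j)} \cdot (-1)^? \Gamma(\alpha-j)(s-1)^{j-\alpha} + (\text{entire correction})$; more precisely, by \eqref{gamma} applied with $s$ replaced by $s$ (and after accounting for the truncation ${\bf 1}_{y>1}$), $G(s) = G_{J+1}(s) + H_0(s)$ where $H_0$ is holomorphic on $\Re(s) \ge 1$ with controlled growth, and $G_{J+1}(s) = \sum_{j=0}^J c_j (s-1)^{j-\alpha}$ is precisely the truncated Taylor-type expansion of $F$ from Section \ref{aux}. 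The Dirichlet series of $\Lambda_f$ is $-F'/F$, so the Dirichlet series of $h$ is
\[
H(s) = -\frac{F'}{F}(s) G(s) + G'(s) = G'(s) - \frac{F'}{F}(s) G(s).
\]
The key point is that $-(F'/F) F + F' = 0$ identically, so writing $F = G_{J+1} + E_{J+1}$ we get
\[
H(s) = G_{J+1}'(s) - \frac{F'}{F}(s) G_{J+1}(s) + (G'(s) - G_{J+1}'(s)) - \frac{F'}{F}(s)(G(s) - G_{J+1}(s)),
\]
and since $G_{J+1}' - (F'/F)G_{J+1} = -(F'/F)(G_{J+1} - F) + F' - F' = (F'/F) E_{J+1}$, the whole thing becomes $H(s) = (F'/F)(s)\, E_{J+1}(s) + (\text{holomorphic, well-behaved terms})$. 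So $H(s)$ inherits a pole of order (at most) $1$ at $s=1$ with residue $\kappa$, plus an error governed by the size of $E_{J+1}$.

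The main steps I would carry out are: (1) make the above bookkeeping precise, identifying $\kappa$ as the residue of $H$ at $s=1$ (using Lemma \ref{L-bound}(a) and (b) to see that $(F'/F)(s)E_{J+1}(s)$ is $O((s-1)^{J+1-\Re(\alpha)-1}) = O((s-1)^{J-\Re(\alpha)})$, which is bounded near $s=1$ since $J \ge \Re(\alpha)$ when $A > \Re(\alpha)$ — the genuinely singular part of $H$ comes only from the holomorphic-correction interactions and is a simple pole); (2) apply Perron's formula / the smoothing technique of Theorem \ref{SD-weak} to $\sum_{n \le x} h(n)$, shifting to the line $\Re(s) = 1 + 1/\log x$ and truncating the contour; (3) bound the contour integral. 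For $|t|$ large ($|t| \ge 1$) use Lemma \ref{L-bound}(d) to bound $F^{(m)}$ and hence $(F'/F)^{(m)}$, together with the rapid decay of the Mellin transform of a smooth cutoff, exactly as in Theorem \ref{SD-weak}. For $|t|$ small use Lemma \ref{L-bound}(b), whose hypothesis $|t| \le (\sigma-1)^{1-A/(J+1)}/(-\log(\sigma-1))$ is met on the relevant range, to get that the non-polar part of $H(1+1/\log x + it)$ is $O((\log x)^{A - (J+1) + \Re(\alpha)} \cdot (\text{powers of } (1+|t|\log x)) \cdot (\log\log x)^{{\bf 1}_{A=J+1}})$; integrating this against $x^s/s$ yields the claimed error term $O(x(\log x)^{k-A}(\log\log x)^{{\bf 1}_{A=J+1}})$ — note $\Re(\alpha) \le k$ so $(\log x)^{\Re(\alpha)-A}$ absorbs into $(\log x)^{k-A}$. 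The residue computation at the simple pole of $H$ contributes the main term $\kappa x$.

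The main obstacle I anticipate is step (1): disentangling which pieces of $H(s)$ are genuinely polar versus which are merely $C^J$ (so not analytically continuable past $\Re(s)=1$), and organizing the algebra so that the $C^J$-only pieces are all multiplied by enough powers of $(s-1)$ to be harmless — this is where the hypothesis $m = J + k + 1$ and the structure $f = \tau_f$ (giving $|\Lambda_f| \le k\Lambda$, hence good control on $F'/F$ via Lemma \ref{L-bound}(d)) get used. A secondary technical nuisance is that $G(s)$ differs from the clean $G_{J+1}(s)$ by the truncation ${\bf 1}_{y>1}$ in the definition of $g$ and by the $1/\Gamma(\alpha-j)$ normalization interacting with \eqref{gamma}; I would handle this by writing $\sum_{n} (\log n)^{\alpha-1-j} n^{-s} = \int_1^\infty (\log y)^{\alpha-1-j} y^{-s}\,dy + (\text{entire in } s, \text{ polynomially bounded}) = \Gamma(\alpha-j)(s-1)^{j-\alpha} + O(1)$ on $\Re(s) \ge 1$, which is exactly the computation already used inside the proof of Lemma \ref{SD-lem2}. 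Everything else is a routine repetition of the contour-shifting bookkeeping of Theorem \ref{SD-weak} and Lemma \ref{SD-lem2}.
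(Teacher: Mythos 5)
Your proposal takes a genuinely different route from the paper, and as written it has a gap that I do not think can be repaired by ``routine repetition'' of the Perron bookkeeping. The paper's proof of Lemma \ref{SD-lem3} is entirely real-variable: it applies Dirichlet's hyperbola method directly to $\Lambda_f*g$, uses \eqref{f-alpha} and partial-summation estimates for $\sum_{n\le x}g(n)(\log n)^m$ to obtain an asymptotic $\sum_{n\le x}h(n)=\kappa x+\sum_{j\neq\alpha}\frac{\kappa_j}{\Gamma(\alpha-j+1)}\,x(\log x)^{\alpha-j}+O(\cdots)$ with extra, a priori non-negligible, logarithmic terms, and then \emph{kills the $\kappa_j$} by a separate argument: it evaluates $L(x)=\frac{1}{\log x}\sum_n\frac{h(n)}{n}\psi\bigl(\frac{\log n}{\log x}\bigr)$ twice, once from the asymptotic just derived and once by rewriting $h=d\log-\Lambda_f*d$ and invoking Lemma \ref{SD-lem2}, and comparison forces the unwanted coefficients to vanish. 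Nothing in your proposal reproduces this cancellation; you assume that ``the non-polar part'' is directly small, which is precisely the delicate point.

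The concrete obstruction is that the Perron-plus-smoothing machinery of Theorem \ref{SD-weak} cannot reach the exponent $k-A$; it was designed to give $k-\ell$ with $\ell<A/2$, and that loss is structural. With the smooth cutoff $\psi(n/x)$ the boundary contribution $\sum_{x<n\le x+x/T}|h(n)|(\log n)^r$ forces $T$ to be a large power of $\log x$ (because $|h(p)|\asymp(\log p)^{\Re(\alpha)}$ on primes already makes this sum $\gg (x/T)(\log x)^{\Re(\alpha)-1+r}$). But Lemma \ref{L-bound}(b), which is what controls $E_{J+1}^{(m)}$, is valid only in the very narrow window $|t|\ll(\sigma-1)^{1-A/(J+1)}/(-\log(\sigma-1))\ll 1$. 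On the intermediate range $(\log x)^{A/(J+1)-1}\ll|t|\lesssim 1$ you only have the weaker Lemma \ref{L-bound}(a) bounds, which are worse by positive powers of $|s-1|$, and the resulting contribution does not sit inside $O(x(\log x)^{k-A})$ in general (it already fails when $J>\Re(\alpha)-1$ and $A>k$). The paper's Lemma \ref{SD-lem2} avoids this by smoothing $\sum d(n)/n$ with $\phi(\log n/\log x)$, whose Mellin transform decays on the scale $|t|\sim1/\log x$ — that is what lets the contour be truncated at $(\log x)^{A/(J+2)-1}$, inside the window where Lemma \ref{L-bound}(b) applies. That trick produces information about logarithmic averages, not about $\sum_{n\le x}h(n)$; it cannot be transplanted into a Perron formula for the sharp partial sum.

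Two smaller points. Your algebra drops a term: with $F=G_{J+1}+E_{J+1}$ one has $G_{J+1}'-(F'/F)G_{J+1}=(F'/F)E_{J+1}-E_{J+1}'$, not $(F'/F)E_{J+1}$. And your estimate of the size near $s=1$ is off by one power of $\log x$: Lemma \ref{L-bound}(b) with $m=0$ gives $E_{J+1}(1+1/\log x+it)\ll(\log x)^{\Re(\alpha)-A}$ for $|t|\lesssim1/\log x$, but $F'/F$ has a simple pole at $s=1$, so $(F'/F)E_{J+1}\ll(\log x)^{\Re(\alpha)+1-A}$, and $E_{J+1}'\ll(\log x)^{\Re(\alpha)+1-A}$ as well; these are larger than your claimed $(\log x)^{\Re(\alpha)-A}$ even before integrating the contour.
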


\begin{proof} Set $h=\Lambda_f*g-g\log $. We begin by showing that there are coefficients $\kappa,\kappa_0,\kappa_1,\dots$ such that
\eq{h-ps}{
\sum_{n\le x} h(n) = \kappa x +
	\int_2^x \sum_{\substack{0\le j\le J \\ j\neq \alpha}} \frac{\kappa_j}{\Gamma(\alpha-j+1)} (\log y)^{\alpha-j}   \dee y 
		+ O(x(\log x)^{k-A}(\log\log x)^{{\bf1}_{A=J+1}}) .
}
We will later show by a different argument that the coefficients $\kappa_j/\Gamma(\alpha-j+1)$ with $j\neq\alpha$ must vanish. 

Partial summation implies that
\eq{h-e1}{
\sum_{n\le x}g(n)(\log n)^m = \int_2^x \sum_{\substack{0\le j\le J\\ j\neq \alpha}}
	\frac{c_j}{\Gamma(\alpha-j)} (\log y)^{\alpha-j+m-1} \dee y+O(1+(\log x)^{\Re(\alpha)+m-1}) ,
}
 as well as that
\eq{h-e2}{
\sum_{n\le x}g(n) 
	&= x \sum_{\substack{0\le j < J \\ j\neq \alpha}} \frac{\tilde{c}_j}{\Gamma(\alpha-j)} (\log x)^{\alpha-j-1} + O(x(\log x)^{\Re(\alpha)-J-2}) \\
	&=: x \tilde{g}(\log x) + O(x(\log x)^{k-1-A}) 
}
where the terms with $j=\alpha$ can be trivially excluded because $1/\Gamma(0)=0$, and we used that $J+1\ge A$ and $\Re(\alpha)\le k$.

We apply Dirichlet's hyperbola method to the partial sums of $\Lambda_f*g$ to find that
\[
\sum_{n\le x} (\Lambda_f*g)(n)
 = \sum_{b\le \sqrt{x}} g(b) \sum_{a\le x/b}\Lambda_f(a) 
		+ \sum_{a\le \sqrt{x}} \Lambda_f(a)  \sum_{b\le x/a} g(b) 
			- \sum_{a\le\sqrt{x}}\Lambda_f(a)\sum_{b\le \sqrt{x}}g(b) .
\]
We then insert relations \eqref{f-alpha} and \eqref{h-e2} to deduce that
\als{
\frac{1}{x}\sum_{n\le x} (\Lambda_f*g)(n)
	&=  \alpha \sum_{b\le \sqrt{x}} \frac{g(b)}{b}
		+ \sum_{a\le \sqrt{x}} \frac{\Lambda_f(a)\tilde{g}(\log(x/a))}{a} 
			-\alpha  \tilde{g}\left(\frac{\log x}{2}\right)
			+ E,
}
where
\als{
E &\ll  (\log x)^{-A} \sum_{b\le \sqrt{x}} \frac{|g(b)|}{b} 
	+ (\log x)^{k-1-A} \sum_{a\le \sqrt{x}} \frac{|\Lambda_f(a)|}{a} 
	+ \frac{| \tilde{g}(\log\sqrt{x})|}{(\log x)^A}  + (\log x)^{\Re(\alpha)-J-1} \\
	&\ll (\log x)^{-A} \sum_{b\le \sqrt{x}} \frac{(\log b)^{k-1}}{b} + (\log x)^{k-A}
	\ll (\log x)^{k-A} .
}
Consequently, 
\als{
\frac{1}{x}\sum_{n\le x} (\Lambda_f*g)(n)
	&=  \alpha \sum_{b\le \sqrt{x}} \frac{g(b)}{b}
		+ \sum_{a\le \sqrt{x}} \frac{\Lambda_f(a)\tilde{g}(\log(x/a))}{a} 
			-\alpha  \tilde{g}\left(\frac{\log x}{2}\right)
			+ O((\log x)^{k-A}) .
}
For the sum of $g(b)/b$, we use the Euler-McLaurin summation formula to find that
\als{
\sum_{b\le \sqrt{x}} \frac{g(b)}{b}
	&= \int_2^{\sqrt{x}} \frac{g(y)}{y} \dee y 
		  + \int_2^{\sqrt{x}} \{y\}(g'(y)/y-g(y)/y^2) \dee y +  O((\log x)^{\Re(\alpha)-1}/\sqrt{x})  \\
	&= \sum_{\substack{0\le j\le J \\ j\neq\alpha}} 
		\frac{ {c}_j}{\Gamma(\alpha-j+1)} \cdot \frac{(\log x)^{\alpha-j}}{2^{\alpha-j}} 
		+c  +  O((\log x)^{\Re(\alpha)-1}/\sqrt{x})  ,
}
where
\[
c :=   - \sum_{\substack{0\le j\le J \\ j\neq\alpha}} 
		\frac{ {c}_j\cdot (\log 2)^{\alpha-j}}{\Gamma(\alpha-j+1)} 
				+ \int_2^\infty  \{y\}(g'(y)/y-g(y)/y^2) \dee y  .
\]
 
It remains to estimate the sum over $a$. By partial summation and \eqref{f-alpha}, we find that
\als{
\sum_{a\le \sqrt{x}} \frac{\Lambda_f(a)\tilde{g}(\log(x/a))}{a} 
	&=\alpha \int_1^{\sqrt{x}} \frac{\tilde{g}(\log(x/y))}{y} \dee y
		+\alpha\tilde{g}(\log x) + O((\log x)^{\Re(\alpha)-1-A}) \\
	&\quad + \int_1^{\sqrt{x}} \frac{R(y)q(\log(x/y))}{y^2}\dee y ,
}
where $R(y):=\sum_{n\le y}\Lambda_f(n)-\alpha y\ll y(\log y)^{-A}$ and
\[
q(y):=\tilde{g}(y)+\tilde{g}\,'(y) 
	=\sum_{j=0}^J \frac{c_jy^{\alpha-j-1}}{\Gamma(\alpha-j)} + 
\frac{\tilde{c}_Jy^{\alpha-J-2}}{\Gamma(\alpha-J-1)} 
	= g(e^y) + \frac{\tilde{c}_J y^{\alpha-J-2}}{\Gamma(\alpha-J-1)} 
\]
using the fact that $c_j=\tilde{c}_j+\tilde{c}_{j-1}$. In the main term, we make the change of variables $t=\log(x/y)$. In the error term, we develop $q$ into Taylor series about $\log x$: we have that
\[
q(\log(x/y)) =  \sum_{j=0}^{J-1} \frac{q^{(j)}(\log x)}{j!}  (-\log y)^j 
	+ O((\log x)^{\Re(\alpha)-J-1}(\log y)^J) 
\]
for $y\le\sqrt{x}$. Since  $\int_2^{\sqrt{x}} (\log y)^{J-A} y^{-1}\dee y \ll (\log x)^{J+1-A} (\log\log x)^{{\bf1}_{A=J+1}}$ by our assumption that $J<A\le J+1$, we thus find that
\als{
\sum_{a\le \sqrt{x}} \frac{\Lambda_f(a)\tilde{g}(\log(x/a))}{a} 
	&=\alpha \int_{\frac{\log x}{2}}^{\log x} \tilde{g}(t)\dee t+\alpha\tilde{g}(\log x) 
	+\sum_{j=0}^{J-1} \frac{q^{(j)}(\log x)}{j!} 
		\int_1^{\sqrt{x}} \frac{R(y)(-\log y)^j}{y^2}\dee y \\
	&\qquad  + O((\log x)^{\Re(\alpha)-A}(\log\log x)^{{\bf1}_{A=J+1}}) .
}
The first two terms on the right hand side of this last displayed equation can be computed exactly: they equal
\als{
&\alpha \sum_{\substack{0\le j\le J \\ j\neq\alpha}} \frac{\tilde{c}_j\cdot (1-2^{-\alpha+j})}{\Gamma(\alpha-j+1)}(\log x)^{\alpha-j} 
	+\alpha \sum_{\substack{0\le j\le J \\ j\neq\alpha}} \frac{\tilde{c}_j}{\Gamma(\alpha-j)}(\log x)^{\alpha-j-1} \\
&\quad= 	
	\alpha \sum_{\substack{0\le j\le J \\ j\neq\alpha}} 
		\frac{c_j\cdot (1-2^{-\alpha+j})}{\Gamma(\alpha-j+1)}(\log x)^{\alpha-j} 
	+\frac{\alpha \tilde{c}_J\cdot(1-2^{-\alpha+J+1})}{\Gamma(\alpha-J)} (\log x)^{\alpha-J-1}
	+ \alpha \tilde{g}\left(\frac{\log x}{2}\right) ,
}
since $c_j=\tilde{c}_j+\tilde{c}_{j-1}$.  Using the estimates $q^{(j)}(\log x) \ll (\log x)^{\Re(\alpha)-j-1}$ and
\als{
\int_1^{\sqrt{x}} \frac{R(y)(-\log y)^j}{y^2}\dee y
	&= \int_1^\infty \frac{R(y)(-\log y)^j}{y^2}\dee y + O((\log x)^{j-A+1})  \\
	&=: I_j+ O((\log x)^{j-A+1}) 
}
for $j\le J-1<A-1$, we conclude that
\als{
\sum_{a\le \sqrt{x}} \frac{\Lambda_f(a)\tilde{g}(\log(x/a))}{a} - \alpha \tilde{g}\left(\frac{\log x}{2}\right)
	&= \alpha \sum_{\substack{0\le j\le J \\ j\neq\alpha}} \frac{c_j\cdot (1-2^{-\alpha+j})}{\Gamma(\alpha-j+1)}(\log x)^{\alpha-j} +\sum_{j=0}^{J-1} I_j\cdot  \frac{q^{(j)}(\log x)}{j!}
	\\
	&\quad	
	  + O((\log x)^{\Re(\alpha)-A}(\log\log x)^{{\bf1}_{A=J+1}})) .
}
Putting together the above estimates yields the formula
\[
\sum_{n\le x} h(n) = \kappa x +
	  \sum_{\substack{0\le j\le J \\ j\neq \alpha}} \frac{\tilde{\kappa}_j}{\Gamma(\alpha-j+1)}  x (\log x)^{\alpha-j}   
	+	O((\log x)^{k-A}(\log\log x)^{{\bf1}_{A=J+1}}) .
\]
where $\kappa$ and $\tilde{\kappa}_j$ are some constants that can be explicitly computed in terms of the constants $c$, $c_j$ and $I_j$. We may then write the above formula in the form \eqref{h-ps} using the fact that
\[
\frac{x(\log x)^\beta}{\Gamma(\beta+1)} 
	= \int_2^x \frac{(\log y)^\beta}{\Gamma(\beta+1)} \dee y
	+\int_2^x \frac{(\log y)^{\beta-1}}{\Gamma(\beta)} \dee y +O(1)  ,
\]
thus completing the proof of \eqref{h-ps}. 
 
To complete the proof of the lemma, we will show that $\kappa_j/\Gamma(\alpha-j+1)=0$ for all $j\neq\alpha$ with $j<A-k+\Re(\alpha)$. To see this, let $\psi$ be a smooth test function such that
\[
\begin{cases}
\psi(u)=1 &\text{if}\ u\in[0.7,0.9],\\
\psi(u)\in[0,1]&\text{if}\ u\in[2/3,1]\setminus[0.7,0.9],\\
\psi(u)=0 &\text{otherwise},
\end{cases}
\]
and set 
\[
L(x):= \frac{1}{\log x} \sum_n \frac{h(n)}{n} \psi\left(\frac{\log n}{\log x}\right).
\]
We calculate $L(x)$ in two different ways. 

On the one hand, partial summation and \eqref{h-ps} imply that
\eq{L-firstway}{
L(x) &= \kappa\int_0^\infty \psi(t)\dee t  
	+ \sum_{\substack{0\le j\le J \\ \alpha\neq j}} \frac{\kappa_j\cdot (\log x)^{\alpha-j} }{\Gamma(\alpha-j+1)} \int_0^\infty  \psi(t) t^{\alpha-j}\dee t \\
	&\qquad +O((\log x)^{k-A}(\log\log x)^{{\bf1}_{A=J+1}}) .
}

On the other hand, we have that $h=d\log - \Lambda_f*d$ by \eqref{f-g-d}. An application of Lemma \ref{SD-lem2} yields that
\[
L(x)  = O((\log x)^{\Re(\alpha)-A}(\log\log x)^{{\bf1}_{A=J+1}}) 
	-\frac{1}{\log x}  \sum_{a,b} \frac{d(a)\Lambda_f(b)}{ab} \psi\left(\frac{\log(ab)}{\log x}\right) .
\]
Then we observe that, for each fixed $b\le \sqrt{x}$, the function $u\to \psi(u+\log b/\log x)$ is smooth and supported in $[1/6,1]$. We re-apply Lemma \ref{SD-lem2} to find that
\[
 \frac{1}{\log x} \sum_{ b\le\sqrt{x}} \frac{ \Lambda_f(b)}{b} 
		\sum_{a} \frac{d(a) }{a} \psi\left(\frac{\log(ab)}{\log x}\right)  = O((\log x)^{\Re(\alpha)-A}(\log\log x)^{{\bf1}_{A=J+1}}).
 \]
Finally, for fixed $a\le\sqrt{x}$, we use relation \eqref{f-alpha} to find that
\als{
 \frac{1}{\log x}\sum_{b\ge\sqrt{x}} \frac{\Lambda_f(b)}{b} \psi\left(\frac{\log(ab)}{\log x}\right) 
	&=  \frac{\alpha}{\log x}\int_{\sqrt{x}}^\infty  \psi\left(\frac{\log(ay)}{\log x}\right) \frac{\dee y}{y} + O((\log x)^{-A}) \\
	&=\alpha \int_{\frac{1}{2}+\frac{\log a}{\log x}}^\infty \psi(t)\dee t+ O((\log x)^{-A}) .
}
We thus conclude that
\[
L(x) = -\alpha  \sum_a \frac{d(a)}{a} \int_{\frac{1}{2}+\frac{\log a}{\log x}}^\infty \psi(t)\dee t 
	+ O((\log x)^{k-A}(\log\log x)^{{\bf1}_{A=J+1}}) .
\]
The function
\[
\Psi(u) := \int_{1/2+u}^\infty \psi(t)\dee t
\]
is a smooth function supported in $[0,1/2]$ and that is constant for $u\le 1/6$. Hence the function $\phi(u):=\Psi(2u)-\Psi(u)$ is supported on $[1/12,1/2]$. Lemma \ref{SD-lem2} then implies that
\als{
L(x) - L(\sqrt{x})
	&= \alpha  \sum_a \frac{d(a)}{a} \phi\left( \frac{\log a}{\log x}\right)
		+ O((\log x)^{k-A}(\log\log x)^{{\bf1}_{A=J+1}}) \\
	& \ll (\log x)^{k-A}(\log\log x)^{{\bf1}_{A=J+1}}  .
}
By our choice of $\psi$, comparing the above estimate with \eqref{L-firstway} proves that $\kappa_j/\Gamma(\alpha-j+1)=0$ for all $j\neq \alpha$ with $j<A-k+\Re(\alpha)$, and the lemma follows.  
\end{proof}

We are finally ready to prove our main result:

\begin{proof}[Proof of Theorem \ref{SD}]
We will prove that there is some constant $M$ such that
\eq{SD-indhyp}{
\abs{\sum_{n\le x} d(n) }  \le Mx(\log x)^{k-1-A}(\log\log x)^{{\bf1}_{A=J+1}}  
}
for all $x\ge2$. Together with \eqref{h-e1} and \eqref{h-e2}, this will immediately imply Theorem \ref{SD}. 

As in the proof of Theorem \ref{SD-0}, we induct on the dyadic interval in which $x$ lies. We fix some large integer $j_0$ and note that \eqref{SD-indhyp} is trivially true when $2\le x\le 2^{j_0}$ by adjusting the constant $M$. Fix now some integer $j\ge j_0$ and assume that \eqref{SD-indhyp} holds when $2\le x\le 2^j$. We want to prove that \eqref{SD-indhyp} also holds for $x\in[2,2^{j+1}]$. Whenever we use a big-Oh symbol, the implied constant will be independent of the constant $M$ in \eqref{SD-indhyp}.

Let $x\in[2^{j(1-\eps)},2^{j+1}]$ and $\eps=2/j_0$. We have that 
\[
d\log
	= f*\Lambda_f - g\log
	= d*\Lambda_f+ h
\]
with $h:=g*\Lambda_f-g\log$. Applying Lemma \ref{SD-lem3}, we find that
\als{
\sum_{n\le x}d(n)\log n 
	&= \sum_{ab\le x} \Lambda_f(a) d(b) + \kappa x + O(x(\log x)^{k-A}(\log\log x)^{{\bf1}_{A=J+1}}) \\
	&= \sum_{b\le x^{1-\eps}}d(b) \sum_{a\le x/b}\Lambda_f(a)
		+ \sum_{2\le a\le x^\eps} \Lambda_f(a) \sum_{x^{1-\eps}<b\le x/a}  d(b)  \\
	&\qquad + \kappa x + O(x(\log x)^{k-A}(\log\log x)^{{\bf1}_{A=J+1}}) .
}
We estimate the sum $\sum_{a\le x/b}\Lambda_f(a)$ by \eqref{f-alpha}, and the sum $\sum_{b\le x/a}d(b)$ by the induction hypothesis, since $a\ge2$ here. As in the proof of Theorem \ref{SD-0}, and using the bound $|d(b)|\le |f(b)|+|g(b)|\ll \tau_k(b)+(\log b)^{k-1}$, we conclude that
\als{
\sum_{n\le x}d(n)\log n 
	&=\alpha x \sum_{b\le x^{1-\eps}} \frac{d(b)}{b} 	
		+ \kappa x + O(x(\log x)^{k-A}(\log\log x)^{{\bf1}_{A=J+1}}) \\
	&\qquad + O\left(x\sum_{b\le x^{1-\eps}}\frac{|d(b)|}{b\log^A(x/b)} 
			+ \frac{Mx(\log\log x)^{{\bf1}_{A=J+1}}}{(\log x)^{A+1-k}} \sum_{2\le a\le x^\eps}\frac{|\Lambda_f(a)|}{a} \right) \\
	&=\alpha x \sum_{b\le x^{1-\eps}} \frac{d(b)}{b} 
		+\kappa x +O((\eps^{-A}+\eps M)x(\log x)^{k-A}(\log\log x)^{{\bf1}_{A=J+1}}) )  
}
for all $x\in[2^{j(1-\eps)},2^{j+1}]$. If we could show that the main terms cancel each other, then the induction would be completed as in Theorem \ref{SD-0}. To show this, we will use Lemma \ref{SD-lem2}. 

Firstly, note that when $x\in[2^{j(1-\eps)},2^{j+1}]$, we have that $x^\eps\ge2$, so that $x^{1-\eps}\le x/2\le 2^j$. Re-applying the induction hypothesis yields the bound
\[
\sum_{x^{1-\eps}<b\le 2^j} \frac{d(b)}{b} 
	\ll \eps M (\log x)^{k-A} (\log\log x)^{{\bf1}_{A=J+1}}.
\]
Setting $\lambda_j= \kappa +\alpha \sum_{b\le 2^j} d(b)/b$ then implies that
\eq{E-almost}{
\sum_{n\le x} d(n)\log n 
	= \lambda_j x
		+O(x(\eps^{-A}+\eps M) (\log x)^{k-A}(\log\log x)^{{\bf1}_{A=J+1}}) 
}
for all $x\in[2^{j(1-\eps)},2^{j+1}]$. Set $X=2^j$ and let $\psi$ be a smooth function that is non-negative, supported on $[1-\eps,1]$, assumes the value 1 on $[1-\eps/2,1-\eps/3]$, and for which $\|\psi^{(j)}\|_\infty \ll_j \eps^{-j}$ for all $j$. Then Lemma \ref{SD-lem2} gives us that
\[
\sum_{n=1}^\infty \frac{d(n)}{n} \, \psi\left(\frac{\log n}{\log X}\right) 
	\ll \eps^{-J_2} (\log X)^{k-A}  (\log\log x)^{{\bf1}_{A=J+1}}
\]
for some $J_2=J_2(k,A)>A$. On the other hand, if we set $\phi(u)=\psi(u)/u$ and $R(x)=\sum_{n\le x}d(n)\log n-\lambda_jx$, then partial summation and \eqref{E-almost} yield that
\als{
\sum_{n=1}^\infty \frac{d(n)}{n} \, \psi\left(\frac{\log n}{\log X}\right)
	&=\frac{1}{\log X} \sum_{n=1}^\infty \frac{d(n)\log n}{n} \, \phi\left(\frac{\log n}{\log X}\right) \\
	&=\lambda_j \int_1^\infty \frac{\phi(\frac{\log y}{\log X})}{y\log X}\dee y  
		+\int_{X^{1-\eps}}^X 
			\left(\frac{\phi(\frac{\log y}{\log X})}{\log X}
				-\frac{\phi'(\frac{\log y}{\log X})}{\log^2X} \right)
			\frac{R(y)}{y^2} \dee y \\
	&=\lambda_j \int_0^\infty \phi(u)\dee u 
		+O\left(\eps (\eps^{-A}+\eps M) x(\log x)^{k-A}(\log\log x)^{{\bf1}_{A=J+1}}\right)  ,
}
since $\|\phi\|_\infty\ll1$ and $\|\phi'\|_\infty \ll \eps^{-1} \ll \log X$. Noticing that we also have that $\int_0^\infty \phi(u)\dee u \gg\eps$ by our choice of $\phi$, we deduce that 
\[
\lambda_j \ll (\eps^{-J_2}+\eps M)(\log X)^{k-A}(\log\log x)^{{\bf1}_{A=J+1}} ,
\] 
whence
\[
\sum_{n\le x} d(n)\log n \ll x  (\eps^{-J_2}+\eps M)(\log x)^{k-A}(\log\log x)^{{\bf1}_{A=J+1}} 
\]
for $x\in[2^{j(1-\eps)},2^{j+1}]$. We then apply partial summation to find that
\als{
\sum_{n\le x} d(n)
	&= O(x^{1-\eps}(\log x)^{k-1}) + \int_{x^{1-\eps}}^x \frac{1}{\log y} \dee \sum_{n\le y} d(n)\log n  \\
	&	\ll x  (\eps^{-J_2}+\eps M)(\log x)^{k-A-1} (\log\log x)^{{\bf1}_{A=J+1}}
}
for $x\in(2^j,2^{j+1}]$, since $x^{\eps}\gg (\eps\log x)^A$. Choosing $\eps$ to be small enough, and then $M$ to be large enough in terms of $\eps$, similarly to the proof of Theorem \ref{SD-0}, completes the inductive step. Theorem \ref{SD} then follows.
\end{proof}

%%%%%%%%%%%%%%%%
\section{The error term  in Theorem \ref{SD} is necessary}\label{best-possible}
 
To obtain the specific shape of the error term in Theorem \ref{f-alpha}, we had to use increasingly complicated arguments. A natural question is whether one can produce a sharper error term. We will show that the error term in Theorem \ref{f-alpha} is optimal, when $\alpha$ is a non-negative real number and $A$ is not an integer. Precisely, we have the following result:

\begin{cor}\label{best possible2}
Let $\alpha=k$ and $A$ be  given real numbers with $\alpha \geq 1$, where $A>0$ is not an integer and let $J$ be the largest integer $<A$. There exists a multiplicative function $f$ satisfying \eqref{f-alpha} and the inequality $|f|\le\tau_k$, and coefficients   $\tilde{c}_j$   defined by \eqref{taylor-coeffs}, as well as $\gamma\neq 0$, such that
\[
 \sum_{n\le x} f(n)  
 	=  x \sum_{j=0}^{J} \tilde{c}_j  \frac{ (\log x)^{\alpha-j-1} } {\Gamma(\alpha-j)}
		+(\gamma+o_{x\to\infty}(1)) x (\log x)^{k-1-A}  .
\]
\end{cor}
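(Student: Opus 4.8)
The plan is to realise $f$ as a small multiplicative perturbation of $\tau_k$ whose prime values deviate from $k$ by exactly the order $(\log p)^{-A}$. Fix $c_0\in(0,k)$ and a large prime $p_0$ with $c_0(\log p_0)^{-A}<k$, and let $f$ be the multiplicative function with $f(p)=k$ for $p<p_0$, $f(p)=k-c_0(\log p)^{-A}$ for $p\ge p_0$, and $f(p^\nu)=\binom{f(p)+\nu-1}{\nu}$ for $\nu\ge2$, so that $f=\tau_f$ in the notation of Section \ref{aux}. Since $0\le f(p)\le k$ for all $p$, we have $0\le f(p^\nu)\le\tau_k(p^\nu)$, hence $|f|\le\tau_k$. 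For \eqref{f-alpha}, I would separate the primes below $p_0$ and apply the prime number theorem together with $\sum_{p\le x}(\log p)^{1-A}=\int_2^x(\log t)^{-A}\,\dee t+O(xe^{-c\sqrt{\log x}})=x(\log x)^{-A}+O(x(\log x)^{-A-1})$; this gives $\sum_{p\le x}f(p)\log p=kx-c_0x(\log x)^{-A}+O(x(\log x)^{-A-1})=kx+O(x(\log x)^{-A})$, so \eqref{f-alpha} holds with the prescribed $A$ --- and, crucially, with a genuine secondary term of size $\asymp x(\log x)^{-A}$, which is what will force the error term in Theorem \ref{SD} to be of exact order $x(\log x)^{k-1-A}$.

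Next I would determine the behaviour of $F(s)$ near $s=1$. Write $F=\zeta^k F_2$ with $F_2(s)=\prod_{p\ge p_0}(1-p^{-s})^{c_0(\log p)^{-A}}$, so that $\log F_2(s)=-c_0\sum_{p\ge p_0}(\log p)^{-A}p^{-s}+R_0(s)$ with $R_0$ analytic for $\Re(s)>1/2$. Partial summation over primes, using $\pi(y)=\mathrm{li}(y)+E(y)$ with $E$ controlled by the classical zero-free region, gives $\sum_{p\ge p_0}(\log p)^{-A}p^{-s}=\int_{p_0}^\infty\frac{y^{-s}\,\dee y}{(\log y)^{A+1}}+R_1(s)$, with $R_1$ analytic on the standard LSD domain $\Omega:=\{\sigma>1-c/\log(|t|+2)\}\setminus(-\infty,1]$; and the substitution $y=e^u$ identifies the integral with $(s-1)^A\,\Gamma(-A,(s-1)\log p_0)$. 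Since $A\notin\Z_{\ge0}$ we may use $\Gamma(-A,z)=\Gamma(-A)-\sum_{n\ge0}\frac{(-1)^nz^{n-A}}{n!(n-A)}$ to conclude that this integral equals $\Gamma(-A)(s-1)^A$ plus a function entire in $s$. Altogether,
\[
F(s)(s-1)^k=\widetilde\Phi(s)\exp\!\bigl(-c_0\Gamma(-A)(s-1)^A\bigr)\qquad(s\in\Omega),
\]
where $\widetilde\Phi$ is holomorphic near $s=1$ with $\widetilde\Phi(1)=b_0:=\prod_{p\ge p_0}(1-1/p)^{c_0(\log p)^{-A}}\in(0,1)$; the factor $\exp(-c_0\Gamma(-A)(s-1)^A)$ is genuinely non-analytic at $s=1$ precisely because $A\notin\Z$, which makes $\Gamma(-A)\ne0$.

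Finally I would run the Landau--Selberg--Delange contour computation once more for this explicit $F$ (the same mechanics as in the proof of Theorem \ref{SD-weak} and in \cite[Ch.~II.5]{Ten}, now keeping the extra factor $(s-1)^A$). Starting from Perron's formula, one uses Lemma \ref{L-bound}(d) for $|t|\ge1$ and the analyticity of $F$ on $\Omega$ to deform the contour onto a Hankel contour encircling $(-\infty,1]$, the portion away from $s=1$ contributing $O(xe^{-c\sqrt{\log x}})$. Near $s=1$ one substitutes $F(s)x^s/s=x^s(s-1)^{-k}\cdot\frac{\widetilde\Phi(s)}{s}\cdot\exp(-c_0\Gamma(-A)(s-1)^A)$, expands $\frac{\widetilde\Phi(s)}{s}=\sum_{j\ge0}\widetilde\phi_j(s-1)^j$ and $\exp(-c_0\Gamma(-A)(s-1)^A)=\sum_{m\ge0}\frac{(-c_0\Gamma(-A))^m}{m!}(s-1)^{mA}$, and integrates term by term with Hankel's formula (Lemma \ref{hankel}); the pair $(j,m)$ contributes $\frac{(-c_0\Gamma(-A))^m}{m!}\widetilde\phi_j\,\frac{x(\log x)^{k-j-mA-1}}{\Gamma(k-j-mA)}$. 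The $m=0$ block is the usual LSD main term; as $\widetilde\phi_j=\widetilde c_j$ for $j\le J$ (the powers $(s-1)^{mA}$ with $m\ge1$ do not affect integer Taylor coefficients of order $\le J<A$) and $J+1>A$, truncating it recovers $x\sum_{j=0}^{J}\widetilde c_j(\log x)^{\alpha-j-1}/\Gamma(\alpha-j)$ up to $O(x(\log x)^{k-J-2})=o(x(\log x)^{k-1-A})$. The equation $k-j-mA-1=k-1-A$ reads $j+mA=A$, which for nonnegative integers $j,m$ has the unique solution $(j,m)=(0,1)$ exactly because $A$ is not an integer; hence the only contribution at scale $(\log x)^{k-1-A}$ is $\gamma\,x(\log x)^{k-1-A}$ with $\gamma=-c_0b_0\Gamma(-A)/\Gamma(k-A)$, and every remaining term is $O(x(\log x)^{k-1-A-\eta})$ for some $\eta>0$. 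Since $A\notin\Z$ and $k\ge1$ force $\Gamma(-A)$ and $\Gamma(k-A)$ to be finite and nonzero, $\gamma\ne0$; combined with Theorem \ref{SD} for the shape of the main sum, this is the asserted formula.

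The main obstacle is the bookkeeping in this last step: first, justifying the contour deformation, which is where the zero-free-region input is genuinely needed (to obtain $F$ analytic on $\Omega$ with acceptable growth); and second, verifying that the doubly-indexed family of Hankel contributions has exactly one term at the exponent $k-1-A$ and everything else strictly smaller --- this hinges entirely on $A$ being non-integral, which is also the reason the corollary excludes integer $A$. By contrast, the incomplete-Gamma manipulation of step two is routine, provided one is careful about which pieces are analytic at $s=1$ versus merely on $\Omega$.
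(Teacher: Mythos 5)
Your construction and the verification of $|f|\le\tau_k$ and of \eqref{f-alpha} are fine, and your identification of the local singularity $F(s)(s-1)^k=\widetilde\Phi(s)\exp(-c_0\Gamma(-A)(s-1)^A)$ is the right shape (in fact your $f$ is essentially the paper's, which takes $f(p)=k-(\log 2/\log p)^A$ with an adjustment at $p=2$). The genuine gap is the analyticity claim for $R_1$ on the LSD domain $\Omega$. Partial summation against $\pi(y)=\li(y)+E(y)$ with $E(y)\ll y e^{-c\sqrt{\log y}}$ gives $R_1(s)=-\int_{p_0}^\infty(\log y)^{-A}y^{-s}\,\dee E(y)$, and after integrating by parts the integrand has size $y^{-\sigma}e^{-c\sqrt{\log y}}\,\dee y/y$, which diverges for every fixed $\sigma<1$; so this representation defines $R_1$ only in the closed half-plane $\Re(s)\ge1$ (this is exactly the phenomenon the paper is organized around: an estimate like \eqref{f-alpha} by itself yields only a $C^J$-continuation up to the line, never analytic continuation past it). Consequently the contour deformation onto a Hankel contour --- the step on which all of your final bookkeeping rests --- is not justified as written. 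It is repairable for this explicit $f$: for instance, write $\sum_{p\ge p_0}(\log p)^{-A}p^{-s}=\frac{1}{\Gamma(A)}\int_0^\infty u^{A-1}\big(\sum_{p\ge p_0}p^{-s-u}\big)\,\dee u$ and transfer the known continuation of $\log\zeta$ (classical zero-free region) to get analyticity in a slit neighbourhood of $s=1$ and on $\Omega$, together with the growth bounds needed on the deformed contour. Note also that you cannot simply cite Tenenbaum's Theorem II.5.2 at the end, since the factor $\exp(-c_0\Gamma(-A)(s-1)^A)$ is not holomorphic at $s=1$; the Hankel computation with the doubly indexed expansion has to be carried out by hand, though your bookkeeping (the unique pair $(j,m)=(0,1)$ at exponent $k-1-A$, and $\gamma=-c_0b_0\Gamma(-A)/\Gamma(k-A)\neq0$ since $A\notin\Z$) is correct granted that analysis.

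For comparison, the paper avoids the continuation issue entirely: it writes $f=\tau_\alpha*g$, proves by purely real-variable means (Lemma \ref{g-ps}, using only the prime number theorem) that $\sum_{n\le x}g(n)$ has a main term of order $x(\log x)^{-A-1}$, and then extracts the expansion via the Dirichlet hyperbola method and Lemma \ref{tau_alpha(m)/m}, choosing the free parameters $\theta$ and $\eps$ to force $\gamma\neq0$. Your route, once the analytic continuation is properly established, buys a completely explicit value of $\gamma$; the paper's route buys freedom from any complex-analytic input, which is in the spirit of the rest of the article. If you want to keep your approach, the continuation step is the one piece you must rebuild.
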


This follows easily from the following theorem:

\begin{thm}\label{best possible}
Let $\alpha\in\C$ and $A>|\alpha|-\Re(\alpha)$. There exists a multiplicative function $f$ satisfying \eqref{f-alpha} and the inequality $|\Lambda_f|\le \max\{|\alpha|,1\}\Lambda$, and for which there exist coefficients $\beta_j$, $j<A$, and $\gamma\neq0$ such that
\[
 \sum_{n\le x} f(n)  
 	=  x \sum_{0\le j<A} \beta_j \frac{ (\log x)^{\alpha-j-1} } {\Gamma(\alpha-j)} 
		+(\gamma+o_{x\to\infty}(1)) x (\log x)^{\alpha-1-A}  .
\]
\end{thm}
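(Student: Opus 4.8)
The plan is to construct $f$ explicitly via its Dirichlet series, chosen so that $F(s)(s-1)^\alpha$ fails to be $C^J$ at $s=1$ in a controlled way, namely by inserting a single singular term of the shape $(s-1)^{A}$ (times a logarithm if needed), which is precisely the regularity threshold allowed by \eqref{f-alpha}. Concretely, I would set
\[
F(s) = \zeta(s)^\alpha \exp\!\Big( c\,(s-1)^{A} \Big)
\]
for a suitable nonzero constant $c$ (with a logarithmic modification $\exp(c(s-1)^A\log(1/(s-1)))$ or similar if one wants to be delicate about which power of $\log$ appears — but since $A\notin\Z$ is the generic case here and $A>|\alpha|-\Re(\alpha)$ guarantees $(s-1)^{A-\alpha}$ has positive real part exponent relative to what we subtract, a clean power suffices). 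Expanding the exponential, $f = \tau_\alpha * f_0$ where $f_0$ has Dirichlet series $\exp(c(s-1)^A)$; one checks $f_0$ is supported away from $1$ with $f_0(n)\ll n^{-1}(\log n)^{-A-1}$-type decay, so that $f$ is multiplicative, $|\Lambda_f|\le\max\{|\alpha|,1\}\Lambda$ holds because $f_0$ contributes negligibly to $\Lambda_f$ at prime powers (the exponential term is $1+O((s-1)^A)$ near $s=1$ and contributes nothing to the leading prime-power behaviour), and \eqref{f-alpha} holds with exactly the exponent $A$: this is the content of Lemma~\ref{L-bound}(b) read in reverse, or can be verified directly by partial summation from the formula for $F'/F$.

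The key computation is then to extract the asymptotics of $\sum_{n\le x} f(n)$ by Perron's formula / Hankel contour. Since $F(s)(s-1)^\alpha = \exp(c(s-1)^A) = 1 + c(s-1)^A + \tfrac{c^2}{2}(s-1)^{2A}+\cdots$, writing $Q(s)=F(s)(s-1)^\alpha$ and truncating the Taylor-type expansion of $Q$ at order $J$ as in the setup of Section~\ref{aux} (keeping the genuine polynomial part $\sum_{j<A}c_j(s-1)^j$ and isolating the leading non-smooth term $c(s-1)^A$), I would apply the contour-deformation argument exactly as in the proof of Theorem~\ref{SD-weak}: the polynomial part produces, via Lemma~\ref{hankel}, the main term $x\sum_{0\le j<A}\beta_j(\log x)^{\alpha-j-1}/\Gamma(\alpha-j)$, while the term $c\,(s-1)^{A-\alpha}$ contributes, again by Hankel's formula (Lemma~\ref{hankel} with $z=\alpha-A$), a term
\[
c\,x\,\frac{(\log x)^{\alpha-A-1}}{\Gamma(\alpha-A)} = (\gamma+o(1))\,x\,(\log x)^{\alpha-1-A},
\]
with $\gamma = c/\Gamma(\alpha-A)\neq 0$ (here the hypothesis $A>|\alpha|-\Re(\alpha)$, equivalently $\Re(\alpha-A)<\Re(\alpha)-|\alpha|+... $, is used to ensure this term genuinely dominates the error and that $\alpha-A$ avoids the poles $0,-1,-2,\dots$ of $1/\Gamma$ — or if it hits one, one perturbs $A$ slightly or inserts the logarithmic factor). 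The remaining terms $(s-1)^{2A},(s-1)^{3A},\dots$ contribute lower-order powers of $\log x$ and are absorbed into $o(x(\log x)^{\alpha-1-A})$, as are the error terms from the contour shift and the tails of the Perron integral, all handled by the estimates of Lemma~\ref{L-bound}.

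The main obstacle I anticipate is twofold and both parts are bookkeeping rather than conceptual. First, justifying rigorously that $f$ defined this way really is a bona fide multiplicative arithmetic function with $|\Lambda_f|\le\max\{|\alpha|,1\}\Lambda$: one must expand $\exp(c(s-1)^A)$ — a function with a branch point, not a nice Euler product — into a Dirichlet series and control its coefficients, which requires care since $(s-1)^A = (\sum_p \sum_\nu \nu^{-1}p^{-\nu s} - \log\frac{1}{s-1} + \text{analytic})^A$ or some such representation; the cleanest route is probably to define $f$ not from this exact $F$ but from a multiplicative $f$ with $f(p)=\alpha$ exactly for all $p$ and $f(p^\nu)$ chosen so that $\sum_p(f(p)-\alpha)$ is identically zero, then perturb only finitely-supported-per-prime data — but then recovering the exact power $(s-1)^A$ in the singularity is subtle, so one instead works with an approximate functional equation and shows the constructed $f$ has Dirichlet series $F(s)=\zeta(s)^\alpha\exp(c(s-1)^A)(1+\text{analytic near }1)$, which is enough. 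Second, the endpoint/resonance issue when $\alpha-A\in\Z_{\le 0}$ must be dispatched, either by noting it cannot happen under the stated hypotheses for the relevant range, or by the standard device of replacing $(s-1)^A$ with $(s-1)^A\log\frac1{s-1}$, which shifts $1/\Gamma$ to its derivative and keeps $\gamma\neq 0$. Corollary~\ref{best possible2} then follows immediately by specializing $\alpha=k\ge 1$ real, $A\notin\Z$, and reading off $\beta_j=\tilde c_j$ after the routine conversion between the $\int_2^x$ form and the $x(\log x)^{\cdots}$ form via the $c_j\leftrightarrow\tilde c_j$ relations.
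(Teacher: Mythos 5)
There is a genuine gap, and it sits exactly where you deferred the work as ``bookkeeping.'' The function $F(s)=\zeta(s)^\alpha\exp\bigl(c(s-1)^A\bigr)$ is not the Dirichlet series of \emph{any} arithmetic function: every Dirichlet series convergent in some half-plane tends to its first coefficient as $\sigma\to+\infty$, whereas $\exp\bigl(c(s-1)^A\bigr)$ tends to $\infty$, to $0$, or oscillates along the real axis (depending on $\arg c$), and in no case to a nonzero constant. So there is no multiplicative $f$ — let alone one with $|\Lambda_f|\le\max\{|\alpha|,1\}\Lambda$ — whose Dirichlet series is your $F$, and the Hankel computation built on it (including the claim $\gamma=c/\Gamma(\alpha-A)$) has no object to apply to. You do gesture at the correct fix (``define $f$ from its prime values and show the singularity appears''), but that step is the entire content of the construction, not a detail: the paper takes $f(p^\nu)=\binom{\alpha_p+\nu-1}{\nu}$ with $\alpha_p=\alpha-e^{i\theta}(\log 2/\log p)^A$ for $p>2$ (and a free parameter $\eps$ at $p=2$), which makes \eqref{f-alpha} hold with exponent exactly $A$ and gives $|\Lambda_f|\le\max\{|\alpha|,1\}\Lambda$ by inspection; only in a neighbourhood of $s=1$ does $\log F-\alpha\log\zeta$ acquire a $(s-1)^A$-type singularity, up to analytic corrections.

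Two further points would still need repair even after fixing the construction. First, the paper does not evaluate $\sum_{n\le x}f(n)$ by contour deformation at all: it writes $f=\tau_\alpha*g$, proves an elementary asymptotic $\sum_{n\le x}g(n)=\lambda_0x(\log x)^{-A-1}+\cdots$ (Lemma \ref{g-ps}, using that $g(p)\asymp(\log p)^{-A}$ and sieve-type bounds on smooth/rough factorizations), and then combines this with the known expansion \eqref{tau_a-asymp} for $\tau_\alpha$ via the hyperbola method and Lemma \ref{tau_alpha(m)/m}; this avoids having to justify any analytic continuation of $F$ to the left of $\Re(s)=1$, which your sketch implicitly assumes. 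Second, your handling of the degenerate case $\alpha-A\in\Z_{\le0}$ by ``perturbing $A$'' is not available, since $A$ is given in the statement; the paper instead secures $\gamma\neq0$ by observing that $\gamma$ depends continuously (indeed affinely) on the free parameter $\eps$ in the definition of $f$ at $p=2$ and on the hyperbola-split point, and choosing these so that the coefficient is nonzero. The hypothesis $A>|\alpha|-\Re(\alpha)$ enters there and in summing the series from Lemma \ref{tau_alpha(m)/m}, not in the way your sketch uses it.
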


\begin{rem}
We say a few words to explain our hypotheses in Corollary \ref{best possible2}. Comparing the result in Theorem \ref{best possible} with Theorem \ref{SD}, we see that each $\beta_j = \tilde{c}_j$. To ensure that the error term is as big as desired we need that $k=\text{Re}(\alpha)$ and so, since $|\alpha|\leq k$, this implies that $\alpha=k$ is a non-negative real number. To obtain that $|f|\le\tau_k$ we need that $|\Lambda_f|\le k\Lambda$ and so   $k\geq 1$. The term with exponent $\alpha-1-A$ is only not part of the series of terms with exponents 
$\alpha-j-1$ if $A$ is not an integer. This explains the assumptions in Corollary \ref{best possible2}.
\end{rem}

To construct $f$ in the proof of Theorem \ref{best possible}, we let $\theta=\text{arg}(\alpha)$, fix a parameter $\eps\in[0,1]$ that will be chosen later, and set
\[
f(p^\nu) = \binom{\alpha_p+\nu-1}{\nu} ,
\quad\text{where}\quad \alpha_p 
	=  \begin{cases}
		\alpha-e^{i\theta}(\log 2/\log p)^A &\text{if}\ p>2,\\
		\alpha -e^{i\theta}(1-\eps) &\text{if}\ p=2,
	\end{cases}
\]
that is to say $f$ is the multiplicative function with Dirichlet series $\prod_p (1-1/p^s)^{-\alpha_p}$. 
We have selected $\alpha_p$ so that it is  a real scalar multiple of $\alpha$, with $|\alpha_p|\leq \max\{|\alpha|,1\}$. Therefore
 $f$ satisfies \eqref{f-alpha}, as well as the inequality $|\Lambda_f|\le \max\{|\alpha|,1\}\Lambda$. We have the following key estimate:

\begin{lem}\label{g-ps} Write $f=\tau_\alpha*g$. There are constants $\lambda_j$ with $\lambda_0=-e^{i\theta} (\log2)^A\sum_{m=1}^\infty g(m)/m$ such that
\[
\sum_{n\le x}g(n) = \sum_{j=0}^J
	\frac{\lambda_j x}{(\log x)^{A+1+j}} + O\left( \frac{x(\log\log x)^{2A+1}}{(\log x)^{2A+1}}\right) .
\]
\end{lem}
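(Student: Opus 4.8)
The plan is to compute the Dirichlet series of $g = f * \tau_{-\alpha}$ and extract its analytic behaviour near $s=1$, then feed that information into the already-established machinery. Write $G(s) = \sum_n g(n) n^{-s} = F(s)\zeta(s)^{-\alpha} = \prod_p (1-1/p^s)^{\alpha - \alpha_p}$. Taking logarithms,
\[
\log G(s) = \sum_p (\alpha_p - \alpha)\log(1-1/p^s) = \sum_{p>2} e^{i\theta}\frac{(\log 2)^A}{(\log p)^A}\log\Bigl(1-\frac{1}{p^s}\Bigr) + e^{i\theta}(1-\eps)\log\Bigl(1-\frac{1}{2^s}\Bigr).
\]
The first thing I would do is show that $G(s)$ extends to a $C^J$-function on $\Re(s)\ge 1$ and, more precisely, that $G(s) - G(1)$ behaves like a constant times $(s-1)^A$ near $s=1$ (up to lower-order corrections of the shape $(s-1)^{A+j}$ and an analytic error). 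This is the genuinely new input: one analyses $\sum_{p>2} (\log 2/\log p)^A \log(1-1/p^s)$ by partial summation against the prime counting function, exactly as in the proof of Lemma~\ref{L-bound}(b), writing it as an integral $\int R(e^w) e^{-w} e^{-w(s-1)} w^{-A}\,\dee w$-type expression; the factor $(\log p)^{-A} \sim w^{-A}$ is what converts the would-be pole into a branch point of order $(s-1)^A$. The outcome should be an expansion $G(s) = \sum_{j=0}^{J}\mu_j (s-1)^j + c^\ast (s-1)^A + \cdots$ valid in a suitable region, with $c^\ast \ne 0$ proportional to $e^{i\theta}(\log 2)^A$.

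Next I would translate this into the partial sums of $g$ directly, rather than going through Perron. Since $g$ is supported with $|g|\le \tau_{2\max\{|\alpha|,1\}}$ or so (as $R_f$-type functions are), and since $g(p) = \alpha_p - \alpha = -e^{i\theta}(\log 2/\log p)^A \ll (\log p)^{-A}$, the function $g$ satisfies a hypothesis of the same flavour as \eqref{f-alpha} but with ``$\alpha = 0$'' and a decay rate governed by $A$; in fact $\sum_{p\le x} g(p)\log p \ll x/(\log x)^A$. One could apply Theorem~\ref{SD-0} to $g$ to get $\sum_{n\le x} g(n) \ll x(\log x)^{-1-A}$, but that is not sharp enough — we want the full expansion $\sum_j \lambda_j x/(\log x)^{A+1+j}$ with an explicit leading constant. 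So instead I would apply Theorem~\ref{SD} itself (or Theorem~\ref{SD-weak}) to $g$: $g$ is a multiplicative function with $|\Lambda_g|$ bounded and satisfying $\sum_{p\le x} g(p)\log p \ll x/(\log x)^A$ (so \eqref{f-alpha} holds with the value of the linear coefficient equal to $0$ and with exponent $A$, possibly after checking $g$ is of the form $\tau_g$ up to a harmless square-full factor). Theorem~\ref{SD} with ``$\alpha = 0$'' gives $\sum_{n\le x} g(n) = \sum_{j=0}^{J} c_j(g) (\log x)^{-j-1}/\Gamma(-j) + O(\cdots)$; but $1/\Gamma(-j) = 0$ for $j\ge 0$, so the main terms there vanish and one only gets the error term. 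This shows the naive application loses the main term, so one really does need the refined branch-point expansion of $G$ from the first step, combined with a Hankel-type contour integration (Lemma~\ref{hankel}) applied to the $(s-1)^A$ singularity: $\frac{1}{2\pi i}\int (s-1)^{-A-1-j}\frac{x^s}{s}\dee s$ contributes $x(\log x)^{A+j}/\Gamma(A+1+j)$-type terms, and after dividing through appropriately one recovers the claimed shape $\lambda_j x/(\log x)^{A+1+j}$.

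Concretely, then, the steps in order are: (1) write $G(s) = \prod_p(1-p^{-s})^{\alpha-\alpha_p}$ and isolate the $p=2$ factor; (2) via partial summation on the prime side, establish the local expansion $G(s) = \text{(analytic part of degree $\le J$)} + (\text{const})\cdot(s-1)^A(1 + O(|s-1|)) $ near $s=1$, with error terms controlled as in Lemma~\ref{L-bound}(b), and identify the constant as $-e^{i\theta}(\log 2)^A G(1) = -e^{i\theta}(\log 2)^A \sum_m g(m)/m$; (3) run the Perron/Hankel argument of Section~\ref{perron} (or quote Theorem~\ref{SD-weak} suitably adapted) with the singularity $(s-1)^A$ in place of $(s-1)^{-\alpha}$, which after accounting for the extra derivatives and the $1/\Gamma$ factors yields $\sum_{n\le x} g(n) = \sum_{j=0}^J \lambda_j x/(\log x)^{A+1+j} + O(x(\log x)^{-2A-1}(\log\log x)^{2A+1})$; (4) check $\lambda_0 = -e^{i\theta}(\log 2)^A\sum_m g(m)/m$ by matching leading coefficients. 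The main obstacle is step (2): getting the branch-point expansion of $\log G$ with a genuinely nonzero leading coefficient and error terms sharp enough to produce the stated $(\log x)^{-2A-1}(\log\log x)^{2A+1}$ remainder — this requires care with the range of validity in $t$ (as in Lemma~\ref{L-bound}(b), one only controls $G$ for $|t|$ small relative to a power of $\sigma-1$) and with the double logarithmic factor that appears when $A$ interacts with itself in the second-order term. Everything else is bookkeeping with Hankel's formula and partial summation.
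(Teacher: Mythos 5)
Your overall route --- extract the $(s-1)^A$ branch-point expansion of $G(s)=\prod_p(1-p^{-s})^{\alpha-\alpha_p}$ near $s=1$ and convert it into an expansion of $\sum_{n\le x}g(n)$ by Perron plus a Hankel-type formula --- is viable in principle, and you locate the source of the main term correctly (modulo bookkeeping: the coefficient of $(s-1)^A$ in $G$ is $-e^{i\theta}(\log 2)^A\Gamma(-A)G(1)$, and it is only after the Hankel step that the $\Gamma(-A)$ cancels to give $\lambda_0=-e^{i\theta}(\log2)^A G(1)$). The genuine gap is the quantitative engine in your step (3). The tools you propose to adapt --- the Perron argument of Section \ref{perron}/Theorem \ref{SD-weak}, with local control ``as in Lemma \ref{L-bound}(b)'' --- cannot reach an error of quality $x(\log x)^{-2A-1+o(1)}$: Lemma \ref{L-bound}(b) is only valid in the tiny range $|t|\le(\sigma-1)^{1-A/(J+1)}/\log\frac{1}{\sigma-1}$, and the machinery built on Lemma \ref{L-bound}(c),(d) is structurally capped at errors of shape $x(\log x)^{k-1-A}$ (that cap is precisely what Section \ref{best-possible} exhibits); for $g$, where the admissible $k$ is essentially $1$ because of the prime $2$, this is $O(x(\log x)^{-A})$ --- already larger than the main term $x(\log x)^{-A-1}$ you must extract, let alone the claimed remainder (and, incidentally, Theorem \ref{SD-0} gives $x(\log x)^{-A}$, not $x(\log x)^{-1-A}$ as you state). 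What makes a sharper analysis possible, and what your plan never invokes, is that $g(p)=-e^{i\theta}(\log2)^A(\log p)^{-A}$ is completely explicit: by the prime number theorem with classical de la Vall\'ee Poussin error, $\log G(s)$ is an explicit function carrying the $(s-1)^A$ singularity plus a function analytic and bounded in a standard zero-free region, and only with that continuation, a genuine keyhole (Hankel) contour, the analogue of Lemma \ref{hankel} for exponents of negative real part, and a separate treatment of the logarithmic singularity when $A\in\Z$, can the Perron integral be pushed far enough. Mere hypotheses of the form $\sum_{p\le x}g(p)\log p\ll x/(\log x)^A$ cannot suffice: they do not even determine $\lambda_0$.

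For comparison, the paper avoids Dirichlet series entirely. Since $\sum_m|g(m)|/m=O(1)$ and $|g(p)|=(\log2)^A(\log p)^{-A}$, it sets $y=x^{1/\log\log x}$ and shows via smooth-number estimates that, up to $O(x/(\log y)^{2A+1})$, only $n=mp$ contribute, with $m\le\sqrt{x}$ being $y$-smooth and $p>y$ prime; then $g(n)=-e^{i\theta}(\log2)^A g(m)(\log p)^{-A}$, the prime number theorem evaluates $\sum_{y<p\le x/m}(\log p)^{-A}$ as $\sum_{j}d_j(x/m)(\log(x/m))^{-A-1-j}+O((x/m)(\log x)^{-2A-1})$, and expanding $(\log(x/m))^{-A-1-j}$ in powers of $\log m/\log x$ and summing over $m$ yields the stated expansion with $\lambda_0=-e^{i\theta}(\log2)^A\sum_m g(m)/m$; the $(\log\log x)^{2A+1}$ loss is exactly the price of the smoothness cutoff. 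If you wish to keep your analytic route, replace step (3) by the zero-free-region continuation and Hankel contour described above --- it would even remove the $(\log\log x)^{2A+1}$ factor --- but as written your proposal is missing that ingredient, and the paper's elementary decomposition is considerably lighter.
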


\begin{proof} The Dirichlet series of $g$ is given by $\prod_p(1-1/p^s)^{\alpha-\alpha_p}$, whence
\[
g(p^\nu) = \binom{\alpha_p-\alpha+\nu-1}{\nu} .
\]
Since $|\alpha-\alpha_p|\le1$, we have that $|g|\le 1$. Note also that $g(p)\ll 1/(\log p)^A$, so that $\sum_{p,\ \nu\ge1}|g(p^\nu)|/p^\nu=O(1)$. By multiplicativity, we conclude that
\eq{g-series}{
\sum_{m=1}^\infty \frac{|g(m)|}{m} =O(1) .
}
In particular, this proves that $\lambda_0$ is well-defined.

To estimate the partial sums of $g$, we take $y:=x^{1/\log\log x}$ and decompose $n$ as $n=ab$, with $a$ having all its prime factors $\le y$ and $b$ having all its prime factors $>y$. Since $|g|\le 1$, the $n$'s with $b$ not being square-free contribute $\ll x/y$ to the sum $\sum_{n\le x}g(n)$, and the $n$'s with $b=1$ contribute 
\[
\le\#\{n\le x:p|n\ \Rightarrow p\le y\} \ll \frac{x}{(\log x)^{2A+1}} 
\] 
(cf. Corollary III.5.19 in \cite{Ten}). Similarly, the number of $n$'s with $a>\sqrt{x}$ contribute $\ll x/(\log x)^{2A+1}$. Finally, if $b$ is square-free with $\omega(b)\ge 2$, then we write $n=mpq$ with $p$ being the largest prime factor of $n$ and $q$ being its second largest prime factor, for which we know that $p,q>y$. We thus find that the contribution of such $n$ is
\als{
&\le \sum_{m\le x/y^2} |g(m)| \sum_{y<q\le \sqrt{x/m}} \frac{(\log2)^A}{(\log q)^A} 
	\sum_{q<p\le x/mq} \frac{(\log2)^A}{(\log p)^A}  \\
&\ll \sum_{m\le x/y^2} |g(m)| \sum_{y<q\le\sqrt{x/m}} \frac{1}{(\log q)^A} \cdot \frac{x/mq}{(\log q)^{A+1}} \\
&\ll \frac{x}{(\log y)^{2A+1}} \sum_{m\le x/y^2} \frac{|g(m)|}{m} \ll \frac{x}{(\log y)^{2A+1}} .
}
Consequently,
\eq{g-first bound}{
\sum_{n\le x}g(n) 
	= -e^{i\theta} (\log 2)^A \sum_{\substack{m\le \sqrt{x} \\ P^+(m)\le y}} g(m)
		\sum_{y<p\le x/m} \frac{1}{(\log p)^A} 
			+ O\left(\frac{x}{(\log y)^{2A+1}}\right) .
}
Before continuing, we note for future reference that the exact same argument can be applied with $|g|$ in place of $g$ and yield the estimate
\eq{g-second bound}{
\sum_{n\le x}|g(n)|
	&= (\log 2)^A \sum_{\substack{m\le \sqrt{x} \\ P^+(m)\le y}} |g(m)|
		\sum_{y<p\le x/m} \frac{1}{(\log p)^A} 
			+ O\left(\frac{x}{(\log y)^{2A+1}}\right) \\
	&\ll \sum_{m\le \sqrt{x}} |g(m)|\cdot \frac{x/m}{(\log x)^{A+1}} + \frac{x}{(\log y)^{2A+1}} 
	\ll \frac{x}{(\log x)^{A+1}} ,
}
where we used \eqref{g-series}.

Going back to estimating the partial sums of $g$, the sum over $p$ in relation \eqref{g-first bound} is $\int_y^{x/m} \dee t/(\log t)^{A+1}+O(x/(m(\log x)^{2A+1}))$ by the Prime Number Theorem. Integrating by parts, we thus have that 
\[
\sum_{y<p\le x/m} \frac{1}{(\log p)^A} 
	= \sum_{0\le j<A}\frac{d_jx/m}{(\log(x/m))^{A+1+j}} + O\left(\frac{x/m}{(\log x)^{2A+1}}\right) 
\]
for some constants $d_j$ with $d_0=1$. Finally, note that
\[
\frac{1}{(\log(x/m))^{A+1+j}}
	= \sum_{i=0}^{J-j} \binom{A+j+i}{i} \frac{(\log m)^i}{(\log x)^{A+i+j+1}} 
		+ O\left( \frac{(\log m)^{A-j}}{(\log x)^{2A+1}}\right) .
\]
Since $\sum_{m>\sqrt{x}} |g(m)|(\log m)^\ell/m \ll (\log x)^{\ell-A}$ for $\ell<A$, and $\sum_{m\le \sqrt{x}}|g(m)|(\log m)^A/m \ll \log\log x$, by \eqref{g-second bound} and partial summation, the lemma follows.
\end{proof}

Finally, we need the following lemma in order to calculate the main terms in Theorem \ref{best possible}.

\begin{lem}\label{tau_alpha(m)/m}
Fix $\alpha\in\C$ and $j\in\Z_{\ge0}$. For $x\ge2$, we have that
\[
\sum_{m\le x} \frac{\tau_\alpha(m)(\log m)^j}{m} =
	 \frac{(\log x)^{\alpha+j}}{(\alpha+j)\Gamma(\alpha)}+R
\]
where we interpret $\Gamma(\alpha)(\alpha+j)$ as $(-1)^j/j!$ when $\alpha=-j$ (i.e. the residue of $\Gamma$ at $-j$) and
\[
R \ll_\alpha 
	\begin{cases} 	
		1	&\text{if}\ -\Re(\alpha)<j<-\Re(\alpha)+1,\\
		(\log x)^{\Re(\alpha)+j-1} &\text{otherwise} .
		\end{cases}
\]
\end{lem}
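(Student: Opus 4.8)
The plan is to evaluate $\sum_{m\le x}\tau_\alpha(m)(\log m)^j/m$ by Perron's formula together with the contour argument of the Landau--Selberg--Delange method, treating it exactly as one treats $\tau_\alpha$ itself (the prototypical LSD function). The Dirichlet series of the arithmetic function $m\mapsto\tau_\alpha(m)(\log m)^j/m$ is
\[
\sum_{m\ge1}\frac{\tau_\alpha(m)(\log m)^j}{m^{s+1}}=(-1)^j\frac{\dee^j}{\dee s^j}\zeta(s+1)^\alpha=:B(s+1),
\]
so a routine truncated Perron formula gives $\sum_{m\le x}\tau_\alpha(m)(\log m)^j/m=\frac1{2\pi i}\int_{(c)}B(s+1)\frac{x^s}{s}\,\dee s$ for $c=1/\log x>0$, up to negligible error. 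Using the classical zero-free region of $\zeta$, I would deform the line of integration around the singularity of $B(s+1)$ at $s=0$ into a Hankel-type contour: a small loop about $s=0$ of radius $r\asymp1/\log x$ together with rays pushed back to $\Re(s)=-c_0$ for a small fixed $c_0>0$. The standard estimates of the LSD method (using the zero-free region and $|\zeta(s+1)^\alpha|$, $|\,(\zeta^\alpha)^{(j)}(s+1)|$ on the shifted contour) show that the part of the contour away from $s=0$, together with the Perron truncation error, contributes $O_{N,\alpha,j}((\log x)^{-N})$ for every $N$, hence is negligible relative to both asserted error bounds.

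It then remains to evaluate the loop around $s=0$. From $\zeta(s+1)=\frac1s+O(1)$ we may write $\zeta(s+1)^\alpha=s^{-\alpha}h(s)$, where $h(s)=(s\zeta(s+1))^\alpha$ is holomorphic near $s=0$ with $h(0)=1$. Differentiating $j$ times and using $\frac{\dee^j}{\dee s^j}s^{-\alpha}=(-1)^j\frac{\Gamma(\alpha+j)}{\Gamma(\alpha)}s^{-\alpha-j}$, we get
\[
B(s+1)=\frac{\Gamma(\alpha+j)}{\Gamma(\alpha)}\,s^{-\alpha-j}+\mathcal E(s),\qquad
\mathcal E(s)=(-1)^j\frac{\dee^j}{\dee s^j}\bigl[s^{-\alpha}(h(s)-1)\bigr]\ll_{\alpha,j}|s|^{1-\Re(\alpha)-j}
\]
for $0<|s|$ small, the bound on $\mathcal E$ following from $h(s)-1\ll|s|$ and Cauchy's estimates on a disc of radius $\asymp|s|$ not containing $0$. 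The leading term contributes, by Hankel's formula for $1/\Gamma$ in the loop form $\frac1{2\pi i}\int_{\mathcal H}x^s s^{-z}\,\dee s=(\log x)^{z-1}/\Gamma(z)$ (cf.\ Lemma~\ref{hankel}; valid for all $z\in\C$ with $1/\Gamma(z)=0$ at non-positive integers),
\[
\frac{\Gamma(\alpha+j)}{\Gamma(\alpha)}\cdot\frac{(\log x)^{\alpha+j}}{\Gamma(\alpha+j+1)}=\frac{(\log x)^{\alpha+j}}{(\alpha+j)\Gamma(\alpha)},
\]
using $\Gamma(\alpha+j+1)=(\alpha+j)\Gamma(\alpha+j)$. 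This is exactly the claimed main term, and the case $\alpha=-j$ is the evident limit: there $\Gamma(\alpha+j)/\Gamma(\alpha)\to(-1)^j j!$, which matches the stated interpretation of $(\alpha+j)\Gamma(\alpha)$ as the residue of $\Gamma$ at $-j$. (When $\alpha\in\Z$ the branch cut is absent and the loop degenerates to a circle, so this reduces to a residue computation, consistently.)

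Finally, for the error I would integrate $\mathcal E(s)\frac{x^s}{s}$ over the loop. The circular part of radius $r=1/\log x$ contributes $\ll r^{1-\Re(\alpha)-j}=(\log x)^{\Re(\alpha)+j-1}$; on the two rays, where $|s|\asymp u$ and $|\mathcal E(s)|\ll u^{1-\Re(\alpha)-j}$ near $s=0$ (and $O(1)$ once $|s|\gg r$), the contribution is $\ll\int_r^{c_0}u^{-\Re(\alpha)-j}x^{-u}\,\dee u$, and the substitution $w=u\log x$ turns this into $(\log x)^{\Re(\alpha)+j-1}\int_{1}^{c_0\log x}w^{-\Re(\alpha)-j}e^{-w}\,\dee w\ll_{\alpha,j}(\log x)^{\Re(\alpha)+j-1}$ in every regime (the $w$-integral being $O_{\alpha,j}(1)$). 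Hence $R\ll_{\alpha,j}(\log x)^{\Re(\alpha)+j-1}$ for all $x\ge2$, which is $\ll_{\alpha,j}1$ precisely when $0<\Re(\alpha)+j<1$, i.e.\ when $-\Re(\alpha)<j<-\Re(\alpha)+1$, giving the stated bound. I expect the main obstacle to be the first paragraph, namely making precise that the off-singularity part of the contour is negligible: this is where the zero-free region of $\zeta$ and the standard, somewhat technical, LSD estimates are needed — alternatively, since $\tau_\alpha$ is the textbook LSD example, one may simply quote such an estimate from \cite{Ten}. The remaining steps (the $\Gamma$-bookkeeping, the Hankel evaluation, and the $\alpha=-j$ degeneracy) are routine.
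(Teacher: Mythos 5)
Your proposal is correct in substance, but it takes a genuinely different route from the paper. The paper never opens a contour in this lemma: it takes the already-available expansions \eqref{tau_alpha-selberg1}--\eqref{tau_alpha-selberg} for $\sum_{n\le x}\tau_\alpha(n)$ (from \eqref{SD-classical}, or from Theorem \ref{SD-weak}, which applies with $A$ arbitrarily large for $\tau_\alpha$) and deduces the lemma by partial summation, with a case split: the two-term expansion when $0<\Re(\alpha)+j<3/2$, the one-term expansion when $\Re(\alpha)+j\ge 3/2$, the observation that for $\Re(\alpha)+j<0$ the complete series converges to $(-1)^j(\zeta^\alpha)^{(j)}(1^+)=0$ so the partial sum is minus a tail, and a limiting argument $\alpha-\eps$ when $\Re(\alpha)+j=0$. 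You instead rerun the LSD contour analysis directly on $(-1)^j(\zeta^\alpha)^{(j)}(s+1)$; your local analysis at $s=0$, the Hankel evaluation, the $\Gamma$-bookkeeping and the $\alpha=-j$ degeneracy are all fine, and your method handles every range of $\Re(\alpha)+j$ uniformly, in fact giving the stronger bound $R\ll(\log x)^{\Re(\alpha)+j-1}$ even in the middle range where the lemma only claims $O(1)$ (the weaker statement reflects the paper's partial-summation proof, which leaves behind bounded constants; your stronger bound is genuinely true, as one can also check by an Abelian argument from $B(s+1)-\frac{\Gamma(\alpha+j)}{\Gamma(\alpha)}s^{-\alpha-j}\to 0$ as $s\to0^+$). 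The price is exactly the step you flag: making the off-singularity part negligible. Two cautions there. First, with only the classical zero-free region you cannot keep the rays at a fixed abscissa $-c_0$ up to unbounded height; you must truncate Perron at some $T$ (say $T=\exp(\sqrt{\log x})$) and let the left contour bend back like $-c/\log(|t|+3)$ away from the real axis, together with the standard bounds for $\zeta^\alpha$ and its derivatives there (via $\log\zeta(s+1)\ll\log\log|t|$ and Cauchy estimates), which is where the $\alpha\in\C$ bookkeeping hides. Second, Tenenbaum's Theorem II.5.2 is stated for the unweighted sums $\sum_{n\le x}\tau_\alpha(n)$, not for your $(\log m)^j/m$-weighted series, so a literal citation does not close this step --- you would have to quote his Hankel-contour machinery and redo the estimates, or, simplest of all, deduce your sum from the unweighted expansion by partial summation, which is precisely the paper's shortcut.
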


\begin{proof} There is $c=O_\alpha(1)$ such that
\al{
\sum_{n\le x} \tau_\alpha(n) 
	&= \frac{x(\log x)^{\alpha-1}}{\Gamma(\alpha)} 
		+ \frac{cx(\log x)^{\alpha-2}}{\Gamma(\alpha-1)} 
	+ O(x(\log x)^{\Re(\alpha)-3}) \label{tau_alpha-selberg1}  \\
	&= \frac{x(\log x)^{\alpha-1}}{\Gamma(\alpha)} 
		+ O(x(\log x)^{\Re(\alpha)-2}) \label{tau_alpha-selberg}
			\quad(x\ge2) .
}
When $3/2>\Re(\alpha)+j>0$, the lemma follows by partial summation and \eqref{tau_alpha-selberg1}, whereas when $\Re(\alpha)+j>3/2$, we use \eqref{tau_alpha-selberg}. 

Next, when $\Re(\alpha)+j<0$, we note that the sum $\sum_{m=1}^\infty \tau_\alpha(m)(\log m)^j/m$ converges amd is equal to 0. Indeed, it equals $(-1)^j$ times the $j$-th derivative of $\zeta(s)^{\alpha}$ evaluated at $s\to1^+$, which tends to 0 in virtue of our hypothesis that $\Re(\alpha)+j<0$. Hence
\[
\sum_{m\le x} \frac{\tau_\alpha(m)(\log m)^j}{m} 
	= - \sum_{m>x} \frac{\tau_\alpha(m)(\log m)^j}{m} .
\]
Estimating the right hand side using \eqref{tau_alpha-selberg} and partial summation proves the lemma in this case too.

It remains to consider the lemma when $\Re(\alpha)=-j$. We then simply observe that
\[
\sum_{m\le x} \frac{\tau_\alpha(m)(\log m)^j}{m} 
 	= \lim_{\eps\to0^+} \sum_{m\le x} \frac{\tau_{\alpha-\eps}(m)(\log m)^j}{m} 
\]
and apply the case when $\Re(\alpha)<j$ proven above.
\end{proof}

We are now ready to estimate the partial sums of $f$:

\begin{proof}[Proof of Theorem \ref{best possible}] For the summatory function of $\tau_\alpha$, we already know an asymptotic series expansion: there exist constants $\kappa_0,\kappa_1,\ldots$ such that for any fixed $\ell\geq 1$, 
\al{
\sum_{n\leq x} \tau_\alpha(n) 
	= x\sum_{j=0}^{\ell-1} \kappa_j (\log x)^{\alpha-j-1} + O(x (\log x)^{\Re(\alpha)-\ell-1}) \label{tau_a-asymp} 
}
by \eqref{SD-classical}, or by Theorem \ref{SD-weak}, which we can apply for arbitrarily large $A$ when $f=\tau_\alpha$. We then estimate the partial sums of $f$ using the Dirichlet hyperbola method:
\eq{f-sharp-hyp}{
\sum_{n\le x}f(n) 
= \sum_{n\le x^\theta} g(n) \sum_{m\le x/n} \tau_\alpha(m) +\sum_{m\le x^\theta} \tau_\alpha(m)  \sum_{x^\theta<n\le x/m}  g(n) ,
}
where $\theta\in[1/3,2/3]$ is a parameter to be chosen in the end of the proof. Letting, as usual, $J$ to be the largest integer $<A$, and using relation \eqref{tau_a-asymp}, we find that
\[
 \sum_{n\le x^\theta} g(n) \sum_{m\le x/n} \tau_\alpha(m) 
 	= x \sum_{n\le x^\theta} \frac{g(n)} n \left(  \sum_{j=0}^J \kappa_j (\log(x/n))^{\alpha-j-1}
	 + O(  (\log x)^{\Re(\alpha)-J-2})\right) ,
\]
which equals
\[
x  \sum_{i+j=0}^J \kappa_j  \frac{\Gamma(j-\alpha+i+1)} {\Gamma(j-\alpha+1)i!}   (\log x)^{\alpha-j-i-1}  
\sum_{n\le x^\theta} \frac{g(n)} n     (\log n)^i + O(  x(\log x)^{\Re(\alpha)-J-2})  .
\]
Now, for $i\le J<A$, we have
\[
\sum_{n\le x^\theta} \frac{g(n)} n     (\log n)^i  = (-1)^i G^{(i)}(1) 	
	- \frac{\theta^{i-A}/(A-i)}{(\log x)^{A-i}} + O\left( \frac 1{(\log x)^{A-i+1}} \right) 
\]
by the Prime Number Theorem. Substituting in then gives
\[
\sum_{n\le x^\theta} g(n) \sum_{m\le x/n} \tau_\alpha(m) 
	= x   \sum_{v= 0}^J \beta_v (\log x)^{\alpha-v-1}   + cx(\log x)^{\alpha-A-1}+ O(  x(\log x)^{\Re(\alpha)-J-2}  )  ,
\]
where
\[
\beta_v=\sum_{i+j=v}
	 (-1)^i G^{(i)}(1) \kappa_j  \frac{\Gamma(j-\alpha+i+1)} {\Gamma(j-\alpha+1)i!}
\quad\text{and}\quad
c=-\kappa_0  \sum_{i=0}^J    \frac{\Gamma(-\alpha+i+1)} {\Gamma(-\alpha+1)i!}   \frac{\theta^{i-A}}{A-i} .
\]
In the notation of Theorem \ref{SD}, we have $\beta_v=\tilde{c}_v$, so that the sum over $v$ constitutes the main term in \eqref{asymp2}.

For the second term in \eqref{f-sharp-hyp}, we have
\[
\sum_{m\le x^\theta} \tau_\alpha(m)  \sum_{x^\theta<n\le x/m}  g(n) 
	= \sum_{0\le j<A} c_j \sum_{m\le x^\theta} \tau_\alpha(m)    \frac{x/m}{(\log(x/m) )^{A+j+1}}   
		+ O\left(\frac{x(\log\log x)^{2A+1}}{(\log x)^{2A+1-|\alpha|}}\right)   .
\]
Lemma \ref{tau_alpha(m)/m} implies that 
\als{
\sum_{m\le x^\theta} \frac{\tau_\alpha(m)}{m(\log(x/m) )^{A+j+1}}   
	&= \sum_{\ell=0}^\infty \binom{A+j+\ell}{\ell} \frac{1}{(\log x)^{A+j+\ell+1}}
		\sum_{m\le x^\theta} \frac{\tau_\alpha(m)  (\log m)^\ell}{m} \\
	&= (\log x)^{\alpha-A-j-1}
		\sum_{\ell=0}^\infty \binom{A+j+\ell}{\ell} \frac{\theta^{\alpha+\ell}}{(\alpha+\ell)\Gamma(\alpha)}  \\
	&\qquad	+ o((\log x)^{\Re(\alpha)-A-j-1}) .
}
Since $A>|\alpha|-\Re(\alpha)$, we conclude that
\als{
\sum_{m\le x^\theta} \tau_\alpha(m)  \sum_{x^\theta<n\le x/m}  g(n) 
	&=  c_0x(\log x)^{\alpha-A-j-1}
		\sum_{\ell=0}^\infty \binom{A+\ell}{\ell} \frac{\theta^{\alpha+\ell}}{(\alpha+\ell)\Gamma(\alpha)} \\
	&\qquad	+ o(x(\log x)^{\Re(\alpha)-A-j-1}) .
}
Therefore
\[
 \sum_{n\le x} f(n)  
 	= x \sum_{j=0}^J \beta_j \frac{ (\log x)^{\alpha-j-1} } {\Gamma(\alpha-j)} 
		+ \left(\gamma +o_{x\to\infty}(1)\right) x (\log x)^{\alpha-1-A}  , 
\]
where
\[
\gamma 
	= -e^{i\theta} (\log2)^A G(1) \sum_{\ell=0}^\infty \binom{A+\ell}{\ell} \frac{\theta^{\alpha+\ell}}{(\alpha+\ell)\Gamma(\alpha)}
	-\frac{1}{\Gamma(\alpha)} \sum_{i=0}^J    \frac{\Gamma(-\alpha+i+1)} {\Gamma(-\alpha+1)i!}   \frac{2^{A-i}}{A-i},
\]
since $\kappa_0=1/\Gamma(\alpha)$ and $c_0=-e^{i\theta}(\log2)^A\sum_{m=1}^\infty g(m)/m = -e^{i\theta}(\log 2)^A G(1)$. We fix $\theta\in[1/3,2/3]$ such that the sum over $\ell$ is non-zero. We note that the constant $\gamma$ is a linear function in $G(1)$, which in turn is a continuous function in the parameter $\eps\in[0,1]$. Choosing an appropriate value of $\eps$, we may ensure that $\gamma\neq0$.
%
%Set
%\[
%F_{\alpha,A}(u) = \sum_{\ell=0}^\infty \binom{A+\ell}{\ell} \frac{u^\ell}{(\alpha+\ell)\Gamma(\alpha)}	
%	= \frac{1}{A!} \sum_{\ell=0}^\infty \frac{(A+\ell)! u^\ell}{(\alpha+\ell)\ell!\,\Gamma(\alpha)}	
%\]
%for $u\in(-1,1)$. We will show by induction that $F_{\alpha,A}(u)\neq0$ for $u\in(0,1)$.
%
%When $\Re(\alpha)>0$, we have that
%\als{
%F_{\alpha,A}(u) 
%	= \frac{1}{u^\alpha} \int_0^u \sum_{\ell=0}^\infty \binom{A+\ell}{\ell} w^{\ell+\alpha-1}\dee w 
%	= \frac{1}{u^{\alpha}} \int_0^u \frac{w^{\alpha-1}}{(1-w)^{A+1}\Gamma(\alpha)} \dee w \neq0 .
%}
%Finally, we note
%\als{
%F_{\alpha,A}'(u) 
%	&=\frac{1}{A!} \sum_{\ell=1}^\infty\frac{(A+\ell)!u^{\ell-1}}{(\alpha+\ell)(\ell-1)!\,\Gamma(\alpha)} \\
%	&= (A+1)\alpha F_{\alpha+1,A+1}(u) .
%}
%Since $F_{\alpha,A}(0)=0$, we conclude that
%\[
%F_{\alpha,A}(u)  = \int_0^u  (A+1)\alpha F_{\alpha+1,A+1}(w)\dee w\neq0
%\]
%for 
%may thus show inductively
%When $\Re(\alpha)>0$, we thus have that
%\[
%F_\alpha(1/2) = \int_0^{1/2} u^{\alpha-1}(1-u)^{-A-1}\dee u \neq 0 .	
%\]
%To treat the case when $\Re(\alpha)<0$, we note that
%\[
%F_{\alpha+1}'(u) = \frac{u}{\alpha} F_\alpha'(u) .
%\]
%
%
This concludes the proof.
\end{proof}

\section{Relaxing the conditions on $|f|$}\label{divisor-bound}
 
We conclude this article by showing that Theorem \ref{SD} remains true if we relax the condition $|f|\le\tau_k$ to strictly weaker conditions, which express that $|f|\le\tau_k$ holds in some average sense. A straightforward hypothesis of this kind is
\eq{A1}{
\sum_{\substack{p\le x \\ \nu\ge1}} \frac{|f(p^\nu)|}{p^\nu} \le k\log\log x+O(1)
	\quad \text{ for all }  x\geq 2.
}
We also need to ensure that the $|f(p)|$, and  the $|f(p^\nu)|,\ \nu\geq 2$, do not vary too wildly on average, which follows from the conditions
\eq{A2}{
 \sum_{p\le x} \frac{|f(p)|\log p}{p} \ll \log x
\quad\text{and}\quad
\sum_{p\le x,\ \nu\ge1} \frac{|f(p^\nu)|^2}{p^\nu} = o_{x\to\infty}(\log x) .
}

As in the beginning of Section \ref{aux}, we write $f=\tau_f*R_f$. Then $R_f$ is supported on square-full integers, and we want to be able to say that 
\eq{A2'}{
\sum_{n\le x}|R_f(n)| \ll x^{1-\delta}\quad(x\ge1)
} 
for some fixed $\delta>0$.  We deduce this  from the second hypothesis  in \eqref{A2}:
\als{
\sum_{n\le x}|R_f(n)| \le \sum_{\substack{ab\le x \\ ab\ \text{square-full}}} |f(a)\tau_{-f}(b)|
	&\le \left( \sum_{\substack{n\le x \\ n\ \text{square-full}}} \tau(n)\right)^{1/2}
	 	\left(\sum_{ab\le x}|f(a)|^2|\tau_{-f}(b)|^2\right)^{1/2}  \\
	&\ll x^{1/4+o(1)} \left(\sum_{ab\le x}|f(a)|^2|\tau_{-f}(b)|^2\cdot \frac{x}{ab}\right)^{1/2}  \\
	&\ll x^{3/4+o(1)} 
}
as $x\to\infty$.

Using the condition \eqref{A2'} and the argument in the beginning of Section \ref{aux}, we reduce the problem to estimating the partial sums of $\tau_f$. Hence, as in Section \ref{aux}, we may assume from now on that $f=\tau_f$, so that $\Lambda_f(p^\nu)  = f(p)\log p$. In particular, we note that \eqref{f-alpha} implies that
\eq{f-alpha'}{
\sum_{n\le x} \Lambda_f(n) = \alpha x + O\left(\frac{x}{(\log x)^A}\right) \quad(x\ge2) ,
}
since
\als{
\sum_{p^\nu\le x,\ \nu\ge2} |\Lambda_f(n)|
	&= \sum_{p^\nu\le x,\ \nu\ge2} |f(p)\log p| \ll \sum_{p\le\sqrt{x}} |f(p)|\log x\\
	&\le \sqrt{x}(\log x)\sum_{p\le \sqrt{x}}\frac{|f(p)|}{p} 
	\ll \sqrt{x}(\log x)(\log\log x) .
}
Furthermore, we have the following estimates on the growth of $\Lambda_f$ and $f$:
\eq{f-growth}{
\sum_{n\le x} \frac{|\Lambda_f(n)|}{n} \ll \log x
\quad\text{and}\quad
\sum_{n\le x} \frac{|f(n)|}{n} \ll (\log x)^k \quad(x\ge2 ) ,
}
which follow immediately from the first hypothesis in \eqref{A2}, and from \eqref{A1}, respectively.

A careful examination of the arguments of Section \ref{proof} reveals that relations \eqref{f-alpha'} and \eqref{f-growth} are the only properties of $f$ that we used when showing Theorem \ref{SD} (after its reduction to the case $f=\tau_f$). Therefore, Theorem \ref{SD} can be extended to all multiplicative functions $f$ satisfying \eqref{f-alpha}, \eqref{A1} and \eqref{A2}.

%%%%%%%%%%%%%%%%%%%%%%%%%%%%%%%%%%%%%%%%%%%%%%%%%%%%%%%%%%%%%%%%%%%%%%%%%%%%%%%%%%%%%%%%%%%%%%%%%%%%%%%%%%%%%%%%%%%%%%%%%%%%%%%%%%%%%%%%%%%%%%%%%%%%%%%%%%%%%%%%%%%%%%%%%%%%%%%%%%%%%%%%%%%%%%%%%%%%%%%%%%%%%%%%%%%%%%%%%%%%%%%%%%%%%%%%%%%%%%%%%%%%%%%%%%%%%%%%%%%%%%%%%%%%%%%%%%%%%%%%%%%%%%%%%%%%%%%%%%%%%%%%%%%%%%%%%%%%%%%%%%%%%%%%%%%%%%%%%%%%%%%%%%%%%%%%%%%%%%%%%%%%%%%%%%%%%%%%%%%%%%%%%%%%%%%%%%%%%%%%%%%%%%%%%%%%%%%%%%%%%%%%%%%%%%%%%%%%%%%%%%%%%%%%%%%%%%%%%%%%%%%%%%%%%%%%%%%%%%%%%%%%%%%%%%%%%%%%%%%%%%%%%%%%%%%%%%%%%%%%%%%%%%%%%%%%%%%%%%%%%%%%%%%%%%%%%%%%%%%%%%%%%%%%%%%%%%%%%%%%%%%%%%%%%%%%%%

\bibliographystyle{plain}

 \end{document}